\documentclass[12pt,a4paper]{amsart}
\usepackage{graphicx} %To include graphics
\usepackage{latexsym}
\usepackage{amssymb}
\usepackage{verbatim}
\usepackage{url}
%\usepackage{showkeys}
%\usepackage[backend=biber, citestyle=numeric-comp, doi=true, url=true, bibstyle=authoryear]{biblatex}
%\addbibresource{GPinedaVillavicencioJabRef.bib}
\usepackage{hyperref}

\newtheorem{thm}{Theorem}[section]
\newtheorem{cor}[thm]{Corollary}
\newtheorem{lem}[thm]{Lemma}
\newtheorem{prop}[thm]{Proposition}
\newtheorem{problem}[thm]{Problem}

\newtheorem{conj}[thm]{Conjecture}

\theoremstyle{definition}

\theoremstyle{remark}
\newtheorem{rmk}{Remark}[section]

\newcommand{\D}{\Delta}
\newcommand{\C}{\mathcal{C}}

\newcommand{\Diag}{\mathcal{D}}
\newcommand{\Q}{\mathcal{Q}}

\title{Constructions of large graphs on surfaces}

\author{Ramiro Feria-Pur\'on}
\email{\texttt{ramiro.feria-puron@uon.edu.au}}
\address{School of
Electrical Engineering and Computer Science,
The University of Newcastle}

\author{Guillermo Pineda-Villavicencio}
\email{\texttt{work@guillermo.com.au}}
\address{Centre for Informatics and Applied Optimisation,
University of Ballarat}
\thanks{Guillermo would like to thank the partial support received by the Australian Research Council Project DP110102011.}

\keywords{degree/diameter problem, graphs on surfaces, Map Colouring Theorem}
\subjclass[2000]{Primary 05C10; Secondary 05C35}

\begin{document}
\begin{abstract}
We consider the degree/diameter problem for graphs embedded in a surface, namely, given a surface $\Sigma$ and integers $\Delta$ and $k$, determine the maximum order $N(\Delta,k,\Sigma)$ of a graph embeddable in $\Sigma$ with maximum degree $\Delta$ and diameter $k$. We introduce a number of constructions which produce many new largest known planar and toroidal graphs. We record all these graphs in the available tables of largest known graphs.

Given a surface $\Sigma$ of Euler genus $g$ and an odd diameter $k$, the  current best asymptotic lower bound for $N(\Delta,k,\Sigma)$ is given by  \[\sqrt{\frac{3}{8}g}\Delta^{\lfloor k/2\rfloor}.\] 
Our constructions produce new graphs of order
\[\begin{cases}6\Delta^{\lfloor k/2\rfloor}& \text{if $\Sigma$ is the Klein bottle}\\  \left(\frac{7}{2}+\sqrt{6g+\frac{1}{4}}\right)\Delta^{\lfloor k/2\rfloor}& \text{otherwise,}\end{cases}\]
thus improving the former value by a factor of $4$.   
\end{abstract}

\maketitle

\section{Introduction}

Given a class $\C$ of graphs and integers $\Delta$ and $k$, the \emph{degree/diameter problem} aims to find the maximum order $N(\Delta,k,\C)$ of a graph in $\C$ with maximum degree $\Delta$ and diameter $k$. For background on this problem the reader is referred to the survey \cite{MS05a}.

Given a surface $\Sigma$, let $\mathcal{G}(\Sigma)$ denote the class of graphs embeddable in $\Sigma$. We set $N(\Delta, k, \Sigma):=N(\Delta, k, \mathcal{G}(\Sigma))$ for simplicity. 

The Moore bound \[1+\Delta+\Delta(\Delta-1)+\ldots+\Delta(\Delta-1)^{k-1}\] provides an upper bound for the order of an arbitrary graph with maximum degree $\Delta$ and diameter $k$. This bound, however, is a very rough upper bound when considering graphs on surfaces. The current best upper bound for $N(\Delta,k,\Sigma)$ was provided by \v{S}iagiov\'a and Simanjuntak \cite{SR04} who showed that, for every  
surface $\Sigma$ of Euler genus $g$, every $k\ge2$ and every $\Delta\ge3$,  \[N(\Delta,k,\Sigma)\le cgk\Delta^{\lfloor k/2\rfloor}.\]

Before continuing we clarify what we mean by a surface and its Euler genus, as different authors adopt different definitions. A {\it surface} is a compact (connected) 2-manifold (without boundary). Every surface is homeomorphic to the sphere with $h$ handles or the sphere with $c$ cross-caps \cite[Theorem~3.1.3]{MohTho01}. The sphere with $h$ handles has {\it Euler genus} $2h$, while the sphere with $c$ cross-caps has {\it Euler genus} $c$.  

 The bound $cgk\Delta^{\lfloor k/2\rfloor}$ still seems to be rough for graphs on surfaces of Euler genus $g$, as demonstrated in \cite{PVW12}.  For the class $\mathcal{P}$ of planar graphs the paper \cite{PVW12} recently showed that, for a fixed $k$, the limit \[\lim_{\Delta\rightarrow \infty}\frac{N(\D,k,\mathcal{P})}{\D^{\lfloor k/2\rfloor}}\] is an absolute constant, independent of $\Delta$ or $k$. 
  
Knor and \v{S}ir\'a\v{n} \cite{KS97} proved that, for every surface $\Sigma$, there exists an integer $\Delta_0$ such that, for all $\Delta\ge\Delta_0$, \[N(\Delta, 2, \Sigma)=N(\Delta, 2, \mathcal{P})=\lfloor\frac{3}{2}\Delta\rfloor+1.\] This result motivated Miller and \v{S}ir\'a\v{n} to pose the following problem \cite[pp.~46]{MS05a}.

\begin{problem}[{\cite[pp.~46]{MS05a}}]
\label{prob:Miller-Siran}
Prove or disprove that, for each surface $\Sigma$ and each diameter $k\ge2$, there exists a constant $\Delta_0$ such that, for maximum degree $\Delta\ge \Delta_0$, $N(\Delta,k,\Sigma)=N(\Delta,k,\mathcal{P})$.
\end{problem}

Problem \ref{prob:Miller-Siran} was answered in the negative by Pineda-Villavicencio and Wood \cite{PVW12}, where the authors proved that, for every  
surface $\Sigma$ of Euler genus $g$, every odd $k\ge3$ and every $\Delta\ge \sqrt{1+24g}+2$, \[N(\Delta,k,\Sigma)\ge\sqrt{\frac{3}{8}g}\Delta^{\lfloor k/2\rfloor}.\]

In Section \ref{sec:LargeGrapSurf} we construct graphs whose orders improve the above lower bound for $N(\Delta,k,\Sigma)$ by a factor of $4$. We  obtain
\[N(\Delta,k,\Sigma)\ge\begin{cases}6\Delta^{\lfloor k/2\rfloor}& \text{if $\Sigma$ is the Klein bottle}\\  \left(\frac{7}{2}+\sqrt{6g+\frac{1}{4}}\right)\Delta^{\lfloor k/2\rfloor}& \text{otherwise.}\end{cases}\] 

Sections \ref{sec:LPlanar} and \ref{sec:LGTorus} are devoted to the construction of new largest known planar and toroidal graphs for maximum degree $3\le\Delta\le 10$ and diameter $2\le k\le10$. In the appendix we provide tables cataloging the largest known such graphs. In the case of planar graphs the existing table was updated with the new orders. For toroidal graphs no such table existed; only a table recording largest regular graphs was available at \cite{Preen_RegToroidal}. We also updated accordingly (created in the case of toroidal graphs \cite{LPP_Toroidal}) the online table of largest known planar graphs \cite{LPP_Planar}.   

Our constructions extend approaches put forward in \cite{FHS98}; Section \ref{sec:MultDiag} explains the methodology.

\section{Multigraphs and diagrams}
\label{sec:MultDiag}
We start from the definition of a \emph{diagram} presented in \cite{FHS98}. A diagram $\Diag$ is a multigraph where edges are labelled  in the form $\alpha(\D, \beta)$ ($\alpha$ and $\beta$ positive integers); see Figure \ref{fig:PodsDiag} (a). An edge in a diagram $\Diag$ is called \emph{thin} if $\alpha=\beta=1$, otherwise it is called \emph{thick}. Similarly, a vertex in $\Diag$ is called \emph{thin} if all its incident edges are thin, otherwise it is called \emph{thick}. For thin edges labels are omitted. The {\it unlabelled degree} of a vertex $v$ in $\Diag$ is the number of edges incident with $v$, while the (labelled)\emph{degree} of a vertex $v$ in $\Diag$ is the sum of all the $\alpha$ values on the labels of the edges incident with $v$. For instance, in Fig.~ \ref{fig:PodsDiag} (a) vertex $v$ has unlabelled degree two and (labelled) degree three. The \emph{weight} of a walk in $\Diag$ is the sum of all the $\beta$ values on the labels of the edges of the walk. An edge $e$ in $\Diag$ with an endvertex of unlabelled degree one is called {\it pending}, otherwise $e$ is called {\it non-pending}. For an integer $k\ge 2$, $\Diag_{\D}^k$ denotes any diagram $\Diag$ with maximum degree $\D$ and labels $\alpha(\D,\beta)$ satisfying $\beta\le k$ for non-pending edges, and $\beta\le \lfloor k/2\rfloor$ for pending edges.

\begin{figure}[!ht]
\begin{center}
\includegraphics[scale=1]{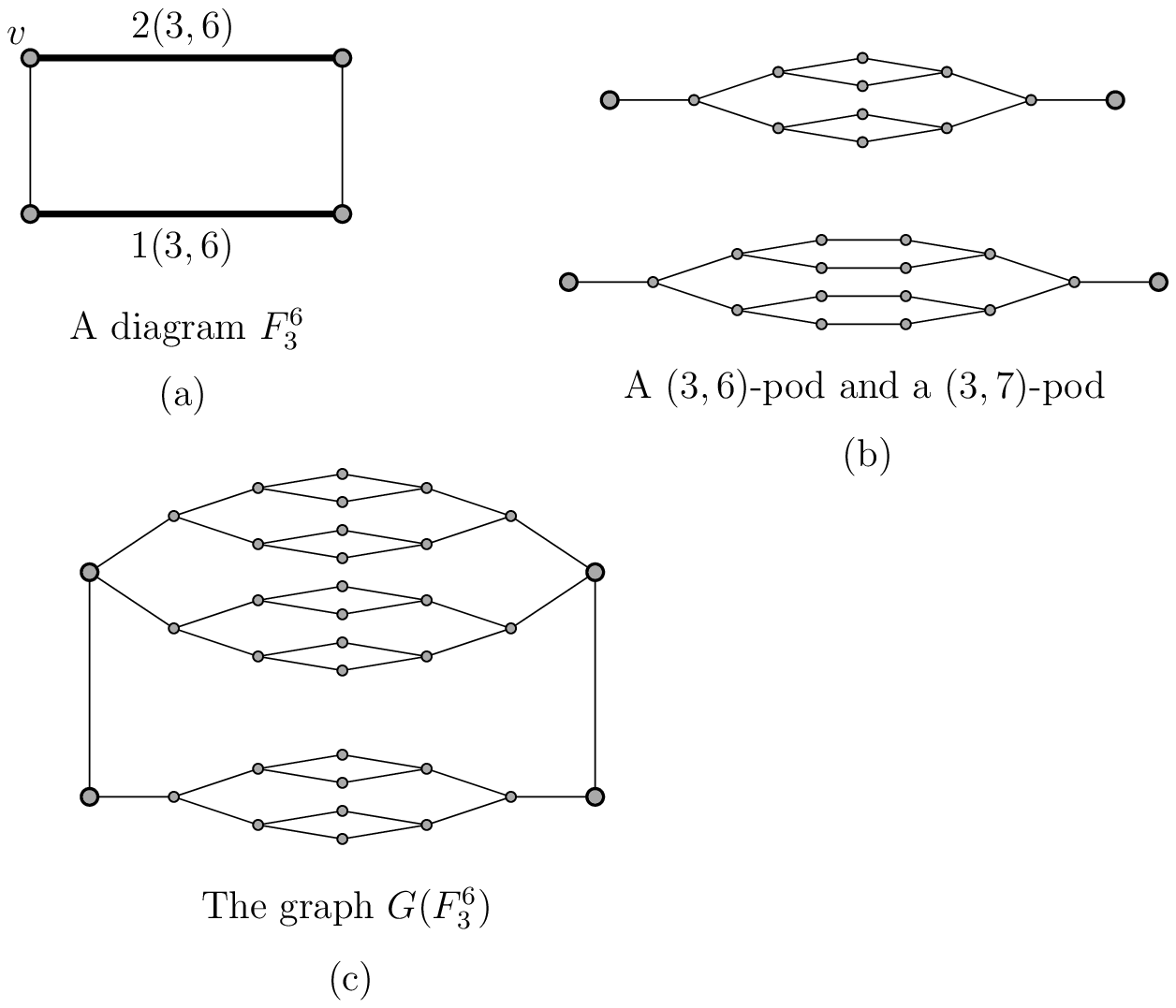}
\caption{}
\label{fig:PodsDiag}
\end{center}
\end{figure}

The {\it depth} of a tree rooted at a vertex $v$ is the length of a longest path from $v$ to the tree leaves. For a positive integer $\gamma$, a \emph{$(\D,\gamma)$-tree} is a tree of depht $\gamma$ with its root and leaves having degree $1$ and all other vertices having degree $\D$. Given a positive integer $\beta$, we call a \emph{$(\D,\beta)$-pod} the planar graph obtained from two $(\D,\lfloor\beta/2\rfloor)$-trees, identifying their leaves if $\beta$ is even and matching their leaves if $\beta$ is odd; see Figure \ref{fig:PodsDiag} (b) for an example. The roots of the two trees used in the pod construction are the \emph{roots} of the pod; the remaining vertices are called \emph{internal}. In a pod, a path linking its roots is called a \emph{vein}. The number of internal vertices in a $(\D, \beta)$-pod with $\beta\ge2$ is $$\frac{\D(\D-1)^{(\beta-2)/2}-2}{\D-2} \quad	\textrm{if $\beta$ is even, }$$and $$\frac{2(\D-1)^{(\beta-1)/2}-2}{\D-2} \quad \textrm{if $\beta$ is odd.}$$

From a diagram $\Diag_{\D}^{k}$ we define a compound graph $G(\Diag_{\D}^{k})$ as follows: a thick non-pending edge $e$ in $\Diag_{\D}^{k}$, labelled by $\alpha(\D, \beta)$, is replaced by $\alpha$ ``disjoint'' $(\D, \beta)$-pods. By ``disjoint'' pods we mean pods that only share their roots. The endvertices of $e$ are identified with the roots of the pods; see Figure \ref{fig:PodsDiag} (c). If instead $e$ is a thick pending edge in $\Diag_{\D}^{k}$, $e$ is replaced by $\alpha$ ``disjoint'' $(\D, \beta)$-trees. The endvertex of $e$ with unlabelled degree other than one is identified with the roots of the trees replacing $e$.

Next we establish relations between $\Diag_{\D}^{k}$ and $G(\Diag_{\D}^{k})$; most of them already appeared  in \cite{FHS98}.

\begin{prop}[{\cite[Lemma 1]{FHS98}}]\label{prop:Degree} The maximum degree of  $G(\Diag_{\D}^{k})$ is at most $\D$.
\end{prop}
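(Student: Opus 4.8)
The plan is to bound the degree of each vertex of $G(\Diag_\D^k)$ by tracking how the construction redistributes the edges incident with a vertex. I would argue vertex by vertex, splitting into the two cases that arise from the definition of $G(\Diag_\D^k)$: vertices that were already present in $\Diag_\D^k$ (i.e.\ endvertices of edges), and internal vertices of the pods and trees that are introduced by the substitutions.

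First I would handle an internal vertex $v$. By construction $v$ lies in a single $(\D,\beta)$-pod (or $(\D,\beta)$-tree) and, since in both a $(\D,\gamma)$-tree and a $(\D,\beta)$-pod every non-root, non-leaf vertex has degree exactly $\D$, and the pods/trees replacing a given edge are disjoint (sharing only their roots, which are \emph{not} internal), the degree of $v$ in $G(\Diag_\D^k)$ is exactly its degree inside that one pod or tree, hence at most $\D$. The only subtlety is the leaves of the two trees forming a pod: when $\beta$ is even they are identified, and when $\beta$ is odd they are matched by an extra edge, so such a vertex has degree $1+1=2\le\D$ (using $\D\ge 2$) — I would note this explicitly.

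Next I would handle an original vertex $v$ of $\Diag_\D^k$, which is where the real counting happens. In $G(\Diag_\D^k)$ the neighbours of $v$ come from replacing each edge $e$ incident with $v$ in the diagram. A thin edge contributes a single edge at $v$ (it is not replaced). A thick non-pending edge $e$ labelled $\alpha(\D,\beta)$ is replaced by $\alpha$ disjoint $(\D,\beta)$-pods whose roots are identified with $v$; at the root of a $(\D,\beta)$-pod the two trees meet, and each root has degree $1$ in its tree, so each pod contributes $2$ edges at $v$ — wait, I need to be careful: the root of a $(\D,\gamma)$-tree has degree $1$, so a single pod, being two such trees glued at the leaves, contributes one edge from each of its two $(\D,\lfloor\beta/2\rfloor)$-trees, i.e.\ $2$ edges at $v$ per pod? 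That would give $2\alpha$, which can exceed $\alpha$; so I must instead observe that the pods replacing $e$ at its \emph{two} endpoints are shared, and the right bookkeeping is that the $\alpha$ pods together contribute, at the endvertex $v$, a total matching the $\alpha$-value. The clean way: each of the two roots of a pod is a single vertex, and the two endvertices of $e$ are identified with these two roots respectively, so each pod contributes exactly the degree of one tree-root $=1$ edge at $v$; thus a thick non-pending edge labelled $\alpha(\D,\beta)$ contributes exactly $\alpha$ edges at $v$. Similarly a thick pending edge labelled $\alpha(\D,\beta)$ is replaced by $\alpha$ disjoint $(\D,\beta)$-trees with roots identified to $v$, contributing $\alpha$ edges (one per tree-root). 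Summing over all edges incident with $v$ gives exactly the labelled degree of $v$ in $\Diag_\D^k$, which is at most $\D$ since $\Diag_\D^k$ has maximum degree $\D$ by definition.

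The main obstacle — and the step I would spend the most care on — is precisely the root-counting in the pod replacement: making sure that identifying endvertices of $e$ with the \emph{roots} of the pods (not their leaves or internal vertices) means each pod contributes only its root-degree $1$, so that $\alpha$ pods contribute $\alpha$, matching the $\alpha$ in the label $\alpha(\D,\beta)$ and not $2\alpha$. Once this is pinned down, the rest is just adding up contributions and invoking the degree constraint built into the definition of $\Diag_\D^k$. I would close by remarking that ``at most'' (rather than ``equal to'') is unavoidable because a vertex of $\Diag_\D^k$ may have labelled degree strictly less than $\D$, and internal vertices that are identified leaves of odd pods have degree $2$.
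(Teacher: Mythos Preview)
The paper does not prove this proposition at all; it is stated without proof as a citation of \cite[Lemma~1]{FHS98}. So there is no ``paper's own proof'' to compare against, and your write-up is strictly more than what appears here.

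That said, your argument is correct and is the natural one. The key bookkeeping point you eventually land on is right: in a $(\D,\beta)$-pod each of the two roots has degree~$1$ (since by definition the root of a $(\D,\gamma)$-tree has degree~$1$), and the two roots are identified with the \emph{two distinct} endvertices of the thick edge, so a single pod contributes exactly one edge at each endvertex, and $\alpha$ pods contribute $\alpha$. Summing over the incident edges then recovers precisely the labelled degree, which is at most $\D$ by the definition of $\Diag_\D^k$. Internal vertices are likewise fine: non-root, non-leaf tree vertices have degree $\D$, and leaves that are identified or matched in the pod construction have degree~$2$.

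Your hesitation about ``$2$ edges at $v$ per pod'' was a momentary misreading of the identification rule; in a clean version you should simply state upfront that the two roots of a pod go to the two ends of the replaced edge, which immediately gives the $\alpha$-count without the detour. With that tightening the proof is complete.
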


\begin{lem}[{\cite[Lemma 2]{FHS98}}]\label{lem:lemma2} Consider a diagram $\Diag_{\D}^{k}$ and the graph $G(\Diag_{\D}^{k})$, and suppose that $e$ and $e'$ are two thick edges of $\Diag_{\D}^{k}$ which lie on a cycle of weight at most $2k+1$. Let $v$ and $v'$ be vertices in $G(\Diag_{\D}^{k})$ of pods corresponding to $e$ and $e'$ respectively. Then the distance in $G(\Diag_{\D}^{k})$ between $v$ and $v'$ is at most $k$.
\end{lem}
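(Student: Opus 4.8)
The plan is to prove Lemma~\ref{lem:lemma2} from two elementary ingredients: a metric property of a single pod, and a dictionary turning weights in $\Diag_{\D}^{k}$ into walk lengths in $G(\Diag_{\D}^{k})$. For the first ingredient, fix a $(\D,\beta)$-pod $P$ with roots $r_1,r_2$, built from two $(\D,\lfloor\beta/2\rfloor)$-trees whose leaves are identified (if $\beta$ is even) or matched (if $\beta$ is odd). I would show that $P$ contains a vein, i.e.\ an $(r_1,r_2)$-path, of length exactly $\beta$, and that
\[ d_P(v,r_1)+d_P(v,r_2)\le\beta\qquad\text{for every vertex }v\text{ of }P. \]
Both facts follow by inspecting the tree structure: a vertex $v$ at depth $j\le\lfloor\beta/2\rfloor$ in the tree rooted at $r_1$ has $d_P(v,r_1)=j$, while descending from $v$ to a leaf of that tree and then climbing the other tree up to $r_2$ (crossing the matching edge when $\beta$ is odd) witnesses $d_P(v,r_2)\le\beta-j$; the case $v=r_1$ is $j=0$, and a vertex in the other tree is symmetric. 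Summing gives the displayed inequality, and taking $j=0$ (equivalently, reading off a vein) gives the length statement.

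For the second ingredient, I would first note that every edge of $\Diag_{\D}^{k}$ lying on a cycle is non-pending, and that each vertex of $\Diag_{\D}^{k}$ is naturally a vertex of $G(\Diag_{\D}^{k})$, being a root of every pod and an endvertex of every tree created from the edges incident with it. A non-pending edge labelled $\alpha(\D,\beta)$ joining $u$ to $u'$ contributes to $G(\Diag_{\D}^{k})$, between $u$ and $u'$, either a single edge (if it is thin, so $\beta=1$) or a vein of length $\beta$ inside one of the replacing $(\D,\beta)$-pods (if it is thick); in either case $d(u,u')\le\beta$, where $d$ denotes distance in $G(\Diag_{\D}^{k})$. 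Concatenating over the edges of any path $Q$ of $\Diag_{\D}^{k}$ all of whose edges are non-pending, one obtains a walk in $G(\Diag_{\D}^{k})$ joining the endpoints of $Q$ and of length at most the weight $w(Q)$.

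Granting these, the estimate is short. Write $a,b$ for the endpoints of $e$, which are precisely the roots of the $(\D,\beta)$-pods replacing $e$, and $a',b'$ for the endpoints of $e'$, the roots of the $(\D,\beta')$-pods replacing $e'$. Deleting $e$ and $e'$ from the given cycle $C$ of weight at most $2k+1$ leaves two arcs, one or both possibly trivial (when $e$ and $e'$ are adjacent or parallel), which, after possibly renaming the endpoints of $e$ and $e'$, we may call $Q_1$ and $Q_2$ with $Q_1$ joining $a$ to $a'$ and $Q_2$ joining $b$ to $b'$; then $\beta+\beta'+w(Q_1)+w(Q_2)=w(C)\le 2k+1$. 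By the second ingredient there are walks in $G(\Diag_{\D}^{k})$ from $a$ to $a'$ of length at most $w(Q_1)$ and from $b$ to $b'$ of length at most $w(Q_2)$, so routing $v$ to $v'$ either through $a,a'$ or through $b,b'$ yields
\[ d(v,v')\le\min\bigl\{\, d(v,a)+w(Q_1)+d(a',v'),\ d(v,b)+w(Q_2)+d(b',v')\,\bigr\}. \]
Bounding the minimum by the average of its two arguments and applying the pod inequality once in the $(\D,\beta)$-pod containing $v$ and once in the $(\D,\beta')$-pod containing $v'$,
\begin{align*}
d(v,v')&\le\tfrac12\bigl(d(v,a)+d(v,b)+d(a',v')+d(b',v')+w(Q_1)+w(Q_2)\bigr)\\
&\le\tfrac12\bigl(\beta+\beta'+w(Q_1)+w(Q_2)\bigr)\le\tfrac{2k+1}{2},
\end{align*}
and since $d(v,v')$ is an integer we conclude $d(v,v')\le k$.

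The scheme is essentially routine, so I do not expect a single hard obstacle; the points that require attention are (i) proving the pod inequality uniformly across the parity of $\beta$ and across the degenerate small values $\beta\in\{1,2\}$, including the case in which $v$ is itself a root, and (ii) correctly pairing the two roots of the pods replacing $e$ with the two roots of the pods replacing $e'$ along the cycle, and verifying that nothing changes when an arc $Q_i$ is trivial (then $w(Q_i)=0$ and the corresponding ``walk'' is a single vertex). The slack of $+1$ between the hypothesis $2k+1$ and the target $2(k+1)$ is exactly what the ``minimum $\le$ average'' step exploits, so no finer accounting is needed.
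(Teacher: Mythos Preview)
Your argument is correct. The paper does not supply its own proof of this lemma---it is quoted verbatim from \cite{FHS98}---so there is no in-paper proof to compare against; however, your two-routes-around-the-cycle idea, combined with the pod inequality $d_P(v,r_1)+d_P(v,r_2)\le\beta$ and the ``$\min\le$ average'' step to absorb the $+1$, is exactly the mechanism the paper itself invokes informally in the proofs of Proposition~\ref{prop:G(Y)-diameter} and Lemma~\ref{lem:BasicLemma}, and is the standard proof from \cite{FHS98}.
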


\begin{prop}\label{prop:Diameter} Consider a diagram $\Diag_{\D}^{k}$ and the graph $G(\Diag_{\D}^{k})$, and suppose the following conditions hold.
\begin{enumerate}
  \item Any two thick edges of $\Diag_{\D}^{k}$ are contained in a closed walk of weight at most $2k+1$.
  \item For any thin vertex $v$ and any thick edge $e$ of $\Diag_{\D}^{k}$, $v$ and $e$ lie in closed walk of weight at most $2k+1$.
  \item There is  a path of weight at most $k$ between any two thin vertices of  $\Diag_{\D}^{k}$.
\end{enumerate}

Then the graph $G(\Diag_{\D}^{k})$ has diameter at most $k$.
\end{prop}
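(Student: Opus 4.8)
The plan is a case analysis on the pair of vertices $u,w\in V(G(\Diag_\D^k))$, proving $d(u,w)\le k$ in each case by lifting walks from the diagram to the compound graph. The mechanism is that a thin edge of $\Diag_\D^k$ becomes a single edge of $G(\Diag_\D^k)$, while a thick edge $e$ labelled $\alpha(\D,\beta)$ becomes $\alpha$ pods (or trees) along whose veins (respectively root-to-leaf paths) one travels $\beta$ steps; hence a closed walk of weight $W$ in $\Diag_\D^k$ lifts to a closed walk of length $W$ in $G(\Diag_\D^k)$ passing through the relevant skeleton vertices. The only facts about the building blocks I would need are: for every vertex $v$ of a $(\D,\beta)$-pod with roots $x,y$ one has $d(v,x)+d(v,y)\le\beta$ (route $v$ down to a leaf and back up the other tree), and every vertex of a $(\D,\beta)$-tree is within $\beta$ of its root. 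Together with the triangle inequality the first fact already yields that a pod has diameter at most $\beta\le k$, by averaging $d(v,v')\le d(v,x)+d(x,v')$ and $d(v,v')\le d(v,y)+d(y,v')$. Finally I would record that every vertex of $G(\Diag_\D^k)$ is either a thin vertex of $\Diag_\D^k$ or lies in a pod/tree of some thick edge, the endpoints of thick edges being roots and hence of the latter kind; so the three cases below are exhaustive.

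The three cases correspond to hypotheses (1)--(3). First, suppose $u$ lies in a structure of a thick edge $e$ and $w$ in a structure of a thick edge $e'$. If $e=e'$, averaging the routes $u\to x\to w$ and $u\to y\to w$ through the endpoints $x,y$ of $e$ and applying the pod fact to both $u$ and $w$ gives $2d(u,w)\le 2\beta_e\le 2k$. If $e\ne e'$, hypothesis (1) supplies a closed walk of weight at most $2k+1$ through $e$ and $e'$; lifting it and averaging the two crossing routes (through an endpoint of $e$ and an endpoint of $e'$) gives $2d(u,w)\le\beta_e+\beta_{e'}+(\text{length of the rest of the lifted walk})\le 2k+1$; for non-pending $e,e'$ this is essentially Lemma \ref{lem:lemma2}. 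Second, suppose $u$ is a thin vertex and $w$ lies in a structure of a thick edge $e'$; hypothesis (2) gives a closed walk of weight at most $2k+1$ through $u$ and $e'$, and lifting and averaging the routes $u\to x'\to w$, $u\to y'\to w$ gives $2d(u,w)\le\beta_{e'}+(2k+1-\beta_{e'})=2k+1$. Third, if $u$ and $w$ are both thin vertices, hypothesis (3) gives a walk of weight at most $k$ between them, which lifts to a walk of length at most $k$ in $G(\Diag_\D^k)$. In each case the odd bound $2k+1$ is essential: $2d(u,w)\le 2k+1$ forces $d(u,w)\le k$.

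The step needing the most care is the treatment of \emph{pending} thick edges. Such an edge is a bridge, hence lies on no cycle but only on closed walks that traverse it ``out and back'' through the leaf side; this is exactly why the hypotheses are phrased with closed walks rather than cycles and why Lemma \ref{lem:lemma2} does not literally apply when a pending thick edge occurs. For such an edge $e$ the lifted closed walk reaches only the non-leaf endpoint $x$ of $e$, returning to it after an excursion of length $2\beta_e$ into a tree of $e$; the corresponding averaging then combines this excursion length with the remainder of the walk and with the estimate $d(w,x)\le\beta_e\le\lfloor k/2\rfloor$ valid for a vertex $w$ of a tree of $e$. Checking that these variant computations still close with the right parity, and that endpoints of thick edges --- being thick vertices --- are genuinely covered by the first two cases, is the remaining bulk of the work.
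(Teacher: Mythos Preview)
Your proposal is correct and follows essentially the same case analysis as the paper's proof, which handles (i) two vertices in structures of the same thick edge, (ii) two vertices in structures of distinct thick edges via condition (1), (iii) a thin vertex and a pod vertex via condition (2), and (iv) two thin vertices via condition (3). The paper's version is terser because it outsources the key mechanism---that a closed walk of weight at most $2k+1$ in the diagram lifts to a closed walk of length at most $2k+1$ in $G(\Diag_\D^k)$, whence the shorter of the two arcs between $u$ and $w$ has length at most $k$---to \cite[Lemma~2]{FHS98} and to the remark on \cite[p.~277]{FHS98}, whereas you unpack this averaging argument explicitly and also spell out the pending-edge case (which the paper dispatches in one line via $\beta\le\lfloor k/2\rfloor$).
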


\begin{proof} If a thick non-pending edge $e$ of $\Diag_{\D}^{k}$ has label $\alpha(\D, \beta)$  then the distance between any two vertices in $G(\Diag_{\D}^{k})$ belonging to the pods which replaced $e$ is at most $k$ \cite[pp.~277]{FHS98}. Since $\beta\le \lfloor k/2\rfloor$, for a thick pending edge $e$ labelled by $\alpha(\D, \beta)$, the distance between any two vertices in $G(\Diag_{\D}^{k})$ belonging to the trees which replaced $e$ is at most $k$ as well. Condition (1) assures that any two vertices of $G(\Diag_{\D}^{k})$ belonging to pods replacing distinct thick edges are at distance at most $k$. Conditions (2) and (3) guarantee that the distance from a thin vertex to any vertex in a pod of $G(\Diag_{\D}^{k})$, and to any other thin vertex, is also at most $k$.
\end{proof}

As we will show in the proofs of Proposition \ref{prop:G(Y)-diameter} and Lemma\ref{lem:BasicLemma}, Condition (1) can often be relaxed to containment in two closed walks of weight $2k+2$, rather than just one closed walk of weight at most $2k+1$.

\section{Large planar graphs with odd diameter}
\label{sec:LPlanar}
Figure \ref{fig:DiagramY} (a) depicts the diagram $C_{\D}^k$ ($\D\ge 4$) suggested in \cite{FHS98}, which gives rise to the largest known planar graphs of maximum degree $\D\ge6$ and odd diameters $k\ge5$. The order of $G(C_{\Delta}^k)$ for odd $k\ge5$ is
\begin{align*}
|G(C_{\Delta}^k)|=(\lfloor\frac{9\D}{2}\rfloor-12)\frac{\D(\D-1)^{\frac{k-3}{2}}-2}{\D-2}+9.
\end{align*}

As noted in \cite{FHS98}, the diagram $C_{\D}^k$ has maximum degree $\D$ and readily satisfies the conditions of Proposition \ref{prop:Diameter}. Thus, the graph $G(C_{\D}^{k})$ has maximum degree $\D$ and diameter $k$. A minor modification of $C_{\Delta}^k$ produces a diagram $Y_{\Delta}^k$ (for odd $k\ge5$) with three additional vertices and, in the case of odd $\D$, with an extra pending edge; see Figures \ref{fig:DiagramY} (b) and (c). Clearly, in both cases $Y_{\Delta}^k$ has maximum degree $\Delta$.

\begin{figure}[!ht]
\begin{center}
\includegraphics[scale=1]{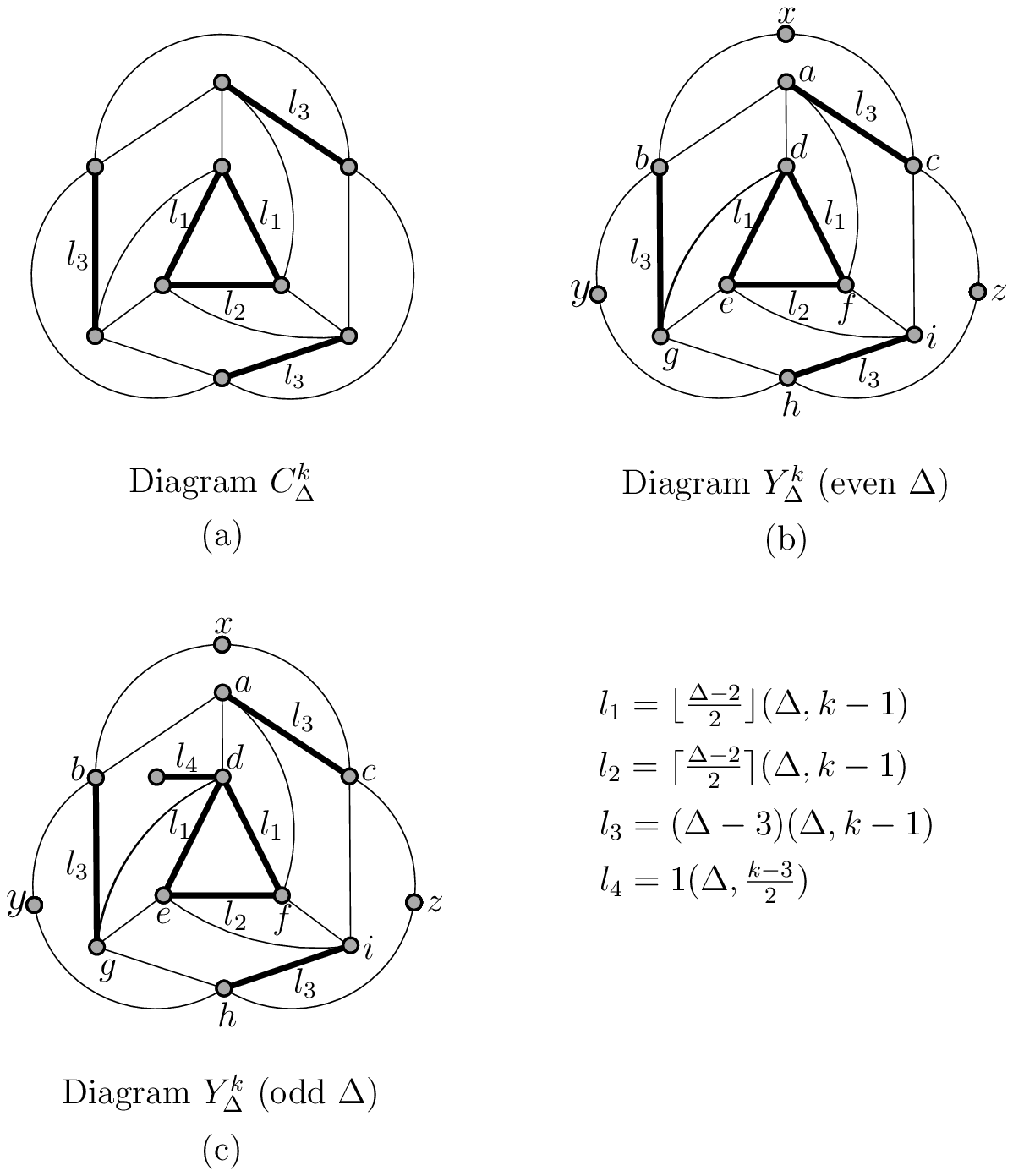}
\caption{Diagrams $C_\Delta^k$ and $Y_\Delta^k$.}
\label{fig:DiagramY}
\end{center}
\end{figure}

\begin{prop}\label{prop:G(Y)-diameter}
For odd $k\ge5$ the diameter of the graph $G(Y_{\D}^k)$ is $k$.
\end{prop}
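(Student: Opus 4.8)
The plan is to verify that the modified diagram $Y_{\D}^k$ satisfies (a suitably relaxed version of) the hypotheses of Proposition~\ref{prop:Diameter}, so that $G(Y_{\D}^k)$ has diameter at most $k$, and then to exhibit an explicit pair of vertices realizing distance exactly $k$, so that the diameter equals $k$. First I would recall precisely how $Y_{\D}^k$ is obtained from $C_{\D}^k$: three new vertices are added, and in the odd-$\D$ case one extra pending edge, which (by the label restriction $\beta\le\lfloor k/2\rfloor$ on pending edges) contributes only a $(\D,\lfloor k/2\rfloor)$-tree at distance $\le\lfloor k/2\rfloor$ from its attachment vertex. Since $C_{\D}^k$ already satisfies the conditions of Proposition~\ref{prop:Diameter}, the work is entirely localized to checking the three new vertices (thin or thick), the new pending edge, and how they interact with the old diagram via closed walks of bounded weight.

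The core of the argument is a careful case analysis of walk weights. For the upper bound I would proceed through Conditions (1)--(3) of Proposition~\ref{prop:Diameter} in order: (1) for every pair consisting of an old thick edge and a new thick edge (or two new thick edges), produce a closed walk of weight $\le 2k+1$ through both; (2) for each new thin vertex and each thick edge, produce a closed walk of weight $\le 2k+1$ through both; (3) for each new thin vertex and each old thin vertex (and each pair of new thin vertices), produce a path of weight $\le k$. Here is where I expect to invoke the relaxation flagged in the remark after Proposition~\ref{prop:Diameter}: for some pairs the natural ``short'' closed walk will have weight exactly $2k+2$ rather than $2k+1$, and one must argue — as in Lemma~\ref{lem:lemma2}-style reasoning — that decomposing into two closed walks each of weight $2k+2$ still forces the pairwise distance in $G(Y_{\D}^k)$ to be at most $k$ (the parity/averaging trick: a vertex in a pod on a weight-$(2k+2)$ cycle is within $k$ of at least one of the two ``ends'', and the extra unit is absorbed because pod distances round down). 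I would carry out this weight bookkeeping on the concrete diagram of Figure~\ref{fig:DiagramY}, treating the even-$\D$ and odd-$\D$ pictures separately since the odd case has the extra pending edge; the labels $\alpha(\D,\beta)$ on the edges of $Y_{\D}^k$ are all explicitly given there, so each closed walk's weight is a finite sum one checks directly.

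For the lower bound, diameter $\ge k$, I would locate two vertices whose distance is exactly $k$ — the natural candidates are an internal vertex at maximal depth $\lfloor k/2\rfloor$ in a pod replacing one thick non-pending edge and a similarly deep internal vertex in a pod replacing a thick edge ``far'' across the diagram, chosen so that every walk between them in $\Diag$ joining their host edges has weight $\ge 2k$, whence every path in $G(Y_{\D}^k)$ between them has length $\ge \lfloor k/2\rfloor + k + \lfloor k/2\rfloor$ minus the shared-root savings, working out to exactly $k$ for odd $k$. Concretely, since $G(C_{\D}^k)$ already has diameter $k$ and $Y_{\D}^k$ only adds vertices/edges without shortening any existing distance (the new pending tree and the three new vertices hang off the old diagram, and one checks no new short-cut between old deep vertices is created), the pair realizing the diameter in $G(C_{\D}^k)$ still realizes distance $k$ in $G(Y_{\D}^k)$; this is the cleanest route and reduces the lower bound to a one-line monotonicity observation plus the known fact about $C_{\D}^k$.

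The main obstacle I anticipate is Condition (1)/the relaxation in the upper-bound step: making the ``two closed walks of weight $2k+2$'' argument rigorous requires re-deriving, for the specific new thick edges of $Y_{\D}^k$, the analogue of the distance estimate underlying Lemma~\ref{lem:lemma2} — namely that if $e$ and $e'$ lie together on each of two closed walks of weight $2k+2$ (rather than one of weight $2k+1$), then vertices of their pods are still at distance $\le k$ in $G$. Everything else is a finite, if slightly tedious, inspection of the labelled diagram, split into the even-$\D$ and odd-$\D$ subcases; I would organize it as a short lemma (the relaxed walk criterion) followed by a table or enumeration of the relevant walks in $Y_{\D}^k$.
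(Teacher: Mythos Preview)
Your plan matches the paper's proof: check Proposition~\ref{prop:Diameter} and, for the handful of thick-edge pairs that fail Condition~(1) (the paper names exactly three, $(ac,bg)$, $(ac,hi)$, $(bg,hi)$, related by the symmetry of $Y_\Delta^k$), exhibit two explicit closed walks of weight $2k+2$ through both edges and use that $k$ is odd to conclude the two pod vertices cannot sit at distance $k+1$ in both walks simultaneously. One caution on your relaxed criterion: merely lying on \emph{some} two closed walks of weight $2k+2$ is not sufficient in general---what drives the parity argument is that the two walks traverse exactly one of the two thick edges in opposite directions (so the relative position of $u$ and $u'$ flips parity), which is why the paper writes down the specific walks $abP'geicPa$ and $adgP'bxcPa$ rather than proving an abstract lemma; your lower-bound paragraph is an addition, as the paper's proof only establishes diameter $\le k$.
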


\begin{proof}
The diagram $Y_{\Delta}^k$ does not satisfy Proposition \ref{prop:Diameter}, as the pairs of edges $(ac,bg)$, $(ac,hi)$ and $(bg,hi)$, and only those pairs, violate Condition (1). It is not difficult to verify that all other pair of edges meet the conditions of Proposition \ref{prop:Diameter}. 

The edges $ac$ and $bg$, however, are contained in the two closed walks of weight $2k+2$, namely $abgeica$ and $adgbxca$. Let $u$ be a vertex in $G(Y_{\Delta}^k)$  of a pod replacing $ac$, and $P$ the vein containing $u$. Similarly, let $u'$ be a vertex in $G(Y_{\Delta}^k)$  of a pod replacing $bg$, and $P'$ the vein containing $u'$. We observe that, since $k$ is odd, if $u$ and $u'$ are at distance $k+1$ in the closed walk $abP'geicPa$, then they cannot also be at distance $k+1$ in the closed walk $adgP'bxcPa$. This alternative to Condition (1) guarantees that the distance between any two vertices in the pods replacing $ac$ and $bg$ is at most $k$. A similar argument applies to the pairs of edges $(ac,hi)$ and $(bg,hi)$; note the symmetry in $Y_{\Delta}^k$.
\end{proof}

The number of vertices in $G(Y_{\Delta}^k)$ is
\[
|G(Y_{\Delta}^k)| =
\begin{cases}
|G(C_{\Delta}^k)|+3 & \text{if $\Delta$ is even}\\
|G(C_{\Delta}^k)|+\frac{(\D-1)^{\frac{k-3}{2}} -1}{\D-2}+3 & \text{if $\Delta$ is odd}\\
\end{cases}
\]

For odd $k\ge5$ and each $\Delta\ge6$ new largest known planar graphs arise from $G(Y_{\Delta}^k)$.

When $k\ge7$ we can do even better, by incorporating three additional pending edges to $Y_{\Delta}^k$. The resulting diagram $Z_{\Delta}^k$ is shown in Figure \ref{fig:DiagramZ}. 

\begin{figure}[!ht]
\begin{center}
\includegraphics[scale=1]{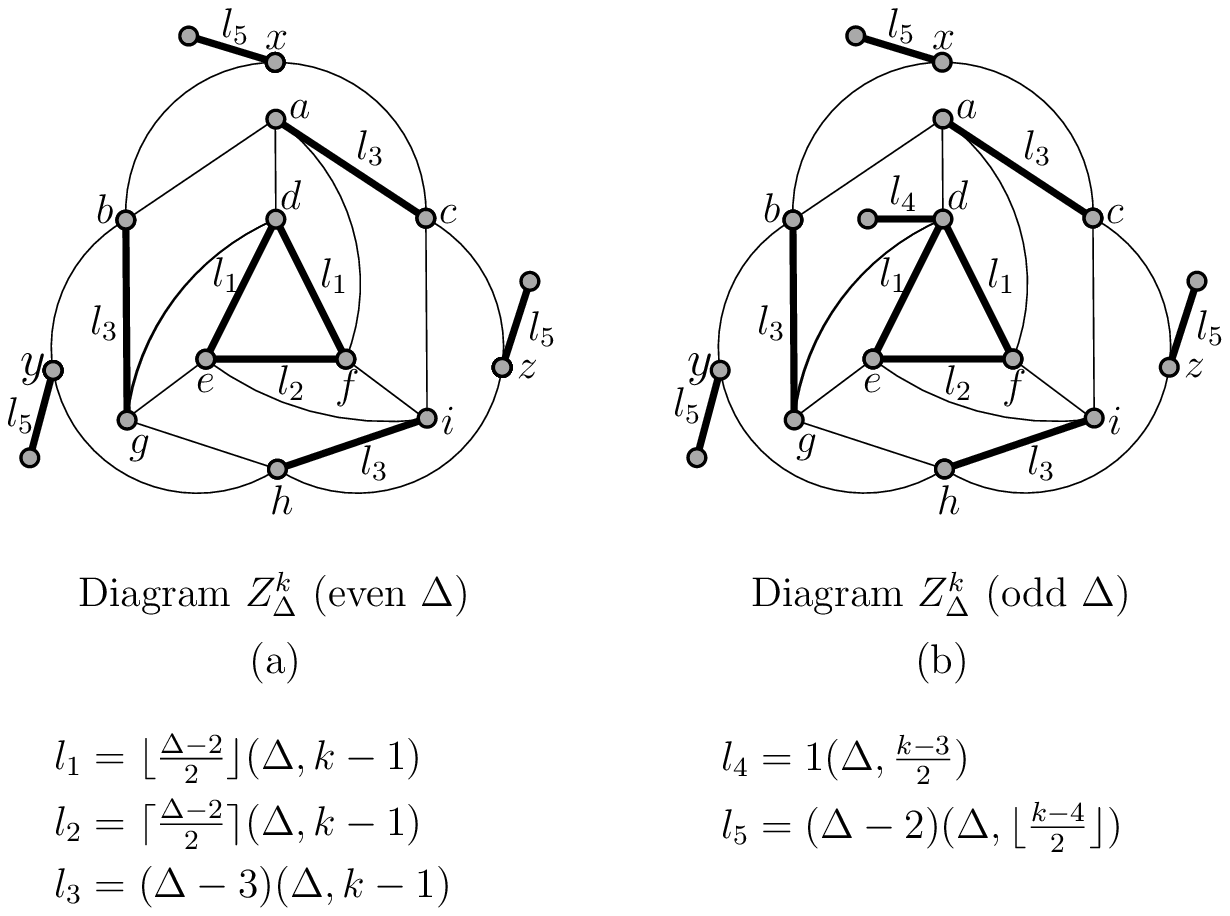}
\caption{Diagram $Z_\Delta^k$.}
\label{fig:DiagramZ}
\end{center}
\end{figure}

\begin{prop}\label{prop:G(Z)-diameter}
For odd $k\ge7$ the diameter of the graph $G(Z_{\Delta}^k)$ is $k$.
\end{prop}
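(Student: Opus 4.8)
The plan is to follow the template established in the proof of Proposition~\ref{prop:G(Y)-diameter}, showing that $Z_{\D}^k$ fails Condition~(1) of Proposition~\ref{prop:Diameter} only for a controlled set of pairs of thick edges, and that each such offending pair is instead caught by two closed walks of weight $2k+2$ together with the parity observation available for odd $k$. First I would record which pairs of thick edges of $Z_{\D}^k$ violate Condition~(1): since $Z_{\D}^k$ is obtained from $Y_{\D}^k$ by attaching three further pending edges, the offending pairs should again be those among the ``long'' non-pending edges $ac$, $bg$, $hi$ (the same ones as in the $Y$ case), while the newly added pending edges, having small weight $\beta\le\lfloor k/2\rfloor$, can be checked directly to lie in a short enough closed walk with every thick edge and with every thin vertex; I would also verify Conditions~(2) and~(3) for the thin vertices of $Z_{\D}^k$, which is a routine inspection of the diagram in Figure~\ref{fig:DiagramZ}.

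Next, for each offending pair — by symmetry it suffices to treat $(ac,bg)$ — I would exhibit two closed walks of weight exactly $2k+2$ containing both edges, the analogues of $abgeica$ and $adgbxca$ from the previous proof. Let $u$ lie on a vein $P$ of a pod replacing $ac$ and $u'$ on a vein $P'$ of a pod replacing $bg$. Because $k$ is odd, the weight-$2k+2$ closed walk $abP'geicPa$ has even length, so if $u$ and $u'$ were at distance exactly $k+1$ along it they would be antipodal on an even cycle; the second closed walk $adgP'bxcPa$ then routes between $u$ and $u'$ through a different pair of ``sides'', and the parity argument from Proposition~\ref{prop:G(Y)-diameter} forces the distance along at least one of the two walks to be at most $k$. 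Invoking Lemma~\ref{lem:lemma2} (applied to the constituent closed walks) or, more precisely, the relaxation of Condition~(1) noted after Proposition~\ref{prop:Diameter}, this gives distance at most $k$ between any vertex of a pod replacing $ac$ and any vertex of a pod replacing $bg$; the pairs $(ac,hi)$ and $(bg,hi)$ follow by the symmetry of $Z_{\D}^k$.

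Finally, I would assemble the pieces: Proposition~\ref{prop:Degree} gives maximum degree at most $\D$ (and the diagram is designed so that equality holds), the argument above plus Proposition~\ref{prop:Diameter} applied to all non-offending pairs gives that every pair of vertices of $G(Z_{\D}^k)$ is at distance at most $k$, and a single explicit pair of vertices realizing distance $k$ (e.g.\ deep in two trees hanging off the farthest pending edges, as in the $C_{\D}^k$ and $Y_{\D}^k$ analyses) shows the diameter is not smaller. I expect the main obstacle to be bookkeeping rather than conceptual: one must carefully catalogue all pairs involving the three new pending edges and confirm none of them introduces a fresh violation of Condition~(1), and one must make sure the two weight-$2k+2$ closed walks for each offending pair genuinely exist in $Z_{\D}^k$ (the new pending edges could in principle interfere with the routes $abgeica$-type walks take, so the walks may need to be chosen to avoid the new pendants). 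Once the walks are pinned down, the parity argument is identical to the one already given, so the real work is the exhaustive but elementary verification against Figure~\ref{fig:DiagramZ}.
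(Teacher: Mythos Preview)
Your approach is essentially the same as the paper's, though considerably more verbose: the paper's proof simply invokes Proposition~\ref{prop:G(Y)-diameter} to handle all pairs of thick edges already present in $Y_{\D}^k$ (including the offending pairs $(ac,bg)$, $(ac,hi)$, $(bg,hi)$), and then notes that Condition~(1) of Proposition~\ref{prop:Diameter} is easily verified for any pair involving one of the three new pending edges. You redo the parity argument for the offending pairs rather than citing the earlier proposition, and you worry that the new pendants might interfere with the weight-$2k+2$ walks---they cannot, since pending edges do not lie on any cycle---but otherwise your plan matches the paper's.
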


\begin{proof}
By virtue of Proposition \ref{prop:G(Y)-diameter}, we only need to verify that Condition (1) of Proposition \ref{prop:Diameter}  holds for any pair of thick edges of $Z_{\Delta}^k$, in which at least one of the three additional pending edges is implicated. This fact can be verified with little effort.
\end{proof}

For the new diagram $Z_{\Delta}^k$ we have
\begin{align*}
|G(Z_{\Delta}^k)|=|G(Y_{\Delta}^k)|+3(\Delta-2)\frac{(\D-1)^{\lfloor\frac{k-4}{2}\rfloor}-1}{\Delta-2}.
\end{align*}

For odd $k\ge7$ and $\Delta\ge6$ new largest known planar graphs arise from $G(Z_{\Delta}^k)$.

The new record orders obtained from $G(Y_{\Delta}^k)$ and $G(Z_{\Delta}^k)$ have been added to the table of largest known planar graphs \cite{LPP_Planar}, and they are also displayed in Table \ref{tab:LargePlanar} of the appendix.

\section{Large graphs embedded in the torus}
\label{sec:LGTorus}
The diagram-based approach explained in the previous section can be used to produce large graphs embeddable in an arbitrary surface.

\begin{rmk}
\label{rmk:Genus}
If a diagram $\Diag_{\D}^{k}$ is embeddable in a surface $\Sigma$ then the graph $G(\Diag_{\D}^{k})$ is also embeddable in $\Sigma$.
\end{rmk}

In this section we obtain large graphs in the torus. For our constructions we will use the diagrams $P_{\D}^{k}$ (for $\D\ge3$) and $Q_{\D}^{k}$ (for $\D\ge5$ and odd diameter $k$), depicted in Figure \ref{fig:DiagramsPQ} (a) and (b), respectively. Since the Petersen graph embeds in the torus, the diagram $P_{\D}^{k}$, based on the Petersen graph, also embeds in the torus. Furthermore, $P_{\D}^{k}$ readily satisfies Proposition \ref{prop:Diameter}. Thus, we have the following. 

\begin{figure}[!ht]
\begin{center}
\includegraphics[scale=1]{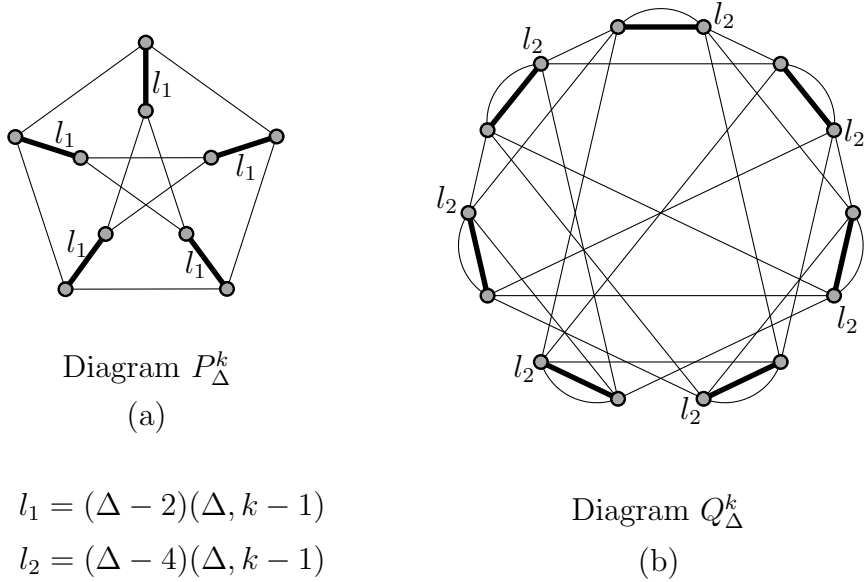}
\caption{Diagrams $P_\Delta^k$ and $Q_\Delta^k$.}
\label{fig:DiagramsPQ}
\end{center}
\end{figure}

\begin{prop}\label{prop:G(P)-diameter}
For any $k\ge3$ the diameter of the graph $G(P_{\D}^k)$ is $k$.
\end{prop}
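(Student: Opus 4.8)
The plan is to invoke Proposition~\ref{prop:Diameter} directly, after checking that the diagram $P_{\D}^k$ meets its three conditions. Since $P_{\D}^k$ is built on the Petersen graph, I would first fix the labelling of the diagram as in Figure~\ref{fig:DiagramsPQ}~(a): identify which edges are thick (the ones carrying a pod of weight $\sim k$) and which are thin, and observe that, by construction, every non-pending thick edge has label $\alpha(\D,\beta)$ with $\beta\le k$ and every pending thick edge has $\beta\le\lfloor k/2\rfloor$, so that $P_{\D}^k$ is indeed a legitimate $\Diag_{\D}^k$. I would also record the lower bound $k\ge3$ is needed so that $2k+1\ge 7$, which is what gives us enough ``room'' in the Petersen graph.

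The core of the argument is a girth/diameter computation on the Petersen graph itself. The Petersen graph has diameter $2$ and girth $5$, and — the key fact — every edge of the Petersen graph lies on a $5$-cycle, and any two edges lie on a common cycle of length at most, say, $6$ or $7$. So for Condition~(1): given two thick edges $e,e'$, I would exhibit a closed walk through both of weight at most $2k+1$; in the unweighted Petersen graph two edges always lie on a common closed walk of length $\le 7\le 2k+1$ (using $k\ge3$), and after assigning the pod weights $\beta$ to the thick edges one must check the weights still total at most $2k+1$. Here the precise labels of $P_{\D}^k$ matter: if the thick edges carry weight roughly $k$ or $\lfloor k/2\rfloor$ each, a short cycle through two of them has weight on the order of $2k$ plus a bounded number of thin edges, and the diagram is designed precisely so this stays $\le 2k+1$. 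Conditions~(2) and~(3) are easier: since the Petersen graph has diameter $2$, any thin vertex is within distance $2$ of either endpoint of any thick edge, and any two thin vertices are within distance $2$ of each other, so the required closed walks of weight $\le 2k+1$ (resp.\ paths of weight $\le k$) exist as long as the thin portion of $P_{\D}^k$ is small, which it is by inspection of the figure.

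Having verified the three conditions, Proposition~\ref{prop:Diameter} yields $\mathrm{diam}\,G(P_{\D}^k)\le k$. For the reverse inequality $\mathrm{diam}\,G(P_{\D}^k)\ge k$ I would exhibit two vertices at distance exactly $k$: take a root $r$ of a pod replacing a thick edge of maximal weight and an internal vertex $u$ at the ``middle'' of a vein of that pod (or, if the diagram has a pending thick edge, a leaf of the corresponding $(\D,\lfloor k/2\rfloor)$-tree). By the structure of a $(\D,\beta)$-pod, $u$ is at distance $\lceil\beta/2\rceil$ from $r$ within the pod and, because the pod is ``deep'' and the only way out is through its two roots, any path in $G(P_{\D}^k)$ from $u$ to a suitably chosen far vertex (e.g.\ a symmetric vertex in an antipodal pod, using that the Petersen graph has diameter~$2$ and the pod weights are tuned to $k$) must traverse length at least $k$; a short case check on which root is used, combined with the weight bookkeeping from Condition~(1), closes this. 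Alternatively, and perhaps more cleanly, I would argue that if $\mathrm{diam}\,G(P_{\D}^k)<k$ then the pods could be shortened, contradicting the design of $P_{\D}^k$ as a diagram realising diameter exactly $k$.

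The main obstacle I anticipate is purely bookkeeping: pinning down the exact labels $\alpha(\D,\beta)$ on the edges of $P_{\D}^k$ from the figure and then verifying that a short cycle in the Petersen graph through any two thick edges has total \emph{weight} (not length) at most $2k+1$ — the ``$2k+1$'' is tight, so one cannot be sloppy about whether a connecting path in the Petersen graph contributes $1$ or $2$ to the walk, nor about the parity of $k$. The conceptual content (Petersen graph has diameter $2$, girth $5$, and is edge-transitive so every edge is on a $5$-cycle) is standard; the work is confirming that the specific weighting chosen for $P_{\D}^k$ was chosen correctly. Since the authors assert ``$P_{\D}^k$ readily satisfies Proposition~\ref{prop:Diameter},'' I expect the written proof to be a one- or two-line appeal to that proposition together with an explicit pair of vertices realising distance $k$.
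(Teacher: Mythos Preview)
Your approach is correct and matches the paper's, which gives no proof beyond the sentence preceding the proposition asserting that $P_{\D}^k$ ``readily satisfies Proposition~\ref{prop:Diameter}.'' Two minor simplifications you may want: in $P_{\D}^k$ the five thick edges form a perfect matching of the Petersen graph (the spokes, each labelled $(\D-2)(\D,k-1)$), so every vertex is thick and Conditions~(2) and~(3) are vacuous; for Condition~(1), the outer/inner $5$-cycle structure forces the thin distance between the endpoints of any two spokes to be exactly $1+2=3$, giving closed-walk weight $(k-1)+(k-1)+3=2k+1$ on the nose. The paper does not address the lower bound $\mathrm{diam}\ge k$ at all.
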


The order of the graph $G(P_{\D}^k)$ is

\begin{displaymath}
|G(P_{\Delta}^k)| =
\begin{cases}
5\big(2(\D-1)^{\frac{k-2}{2}}-2\big)+10 & \text{if $k$ is even}\\
5\big(\Delta(\D-1)^{\frac{k-3}{2}}-2\big)+10 & \text{if $k$ is odd}\\
\end{cases} 
\end{displaymath}

An embedding of $Q_{\D}^{k}$ in the torus, based on an embedding of $K_7$, is presented in Fig.~\ref{fig:DiagramQ}. We use the drawing solution suggested in \cite[Section 2]{KocNeiSzy01}, where the torus is represented by the inner unshaded rectangle. This rectangle is surrounded by a larger, shaded rectangle, containing copies of the actual vertices and edges of the embedding. This drawing solution allows easy visualisation of the  faces and adjacency of the embedding.
 
\begin{figure}[!ht]
\begin{center}
\includegraphics[scale=1]{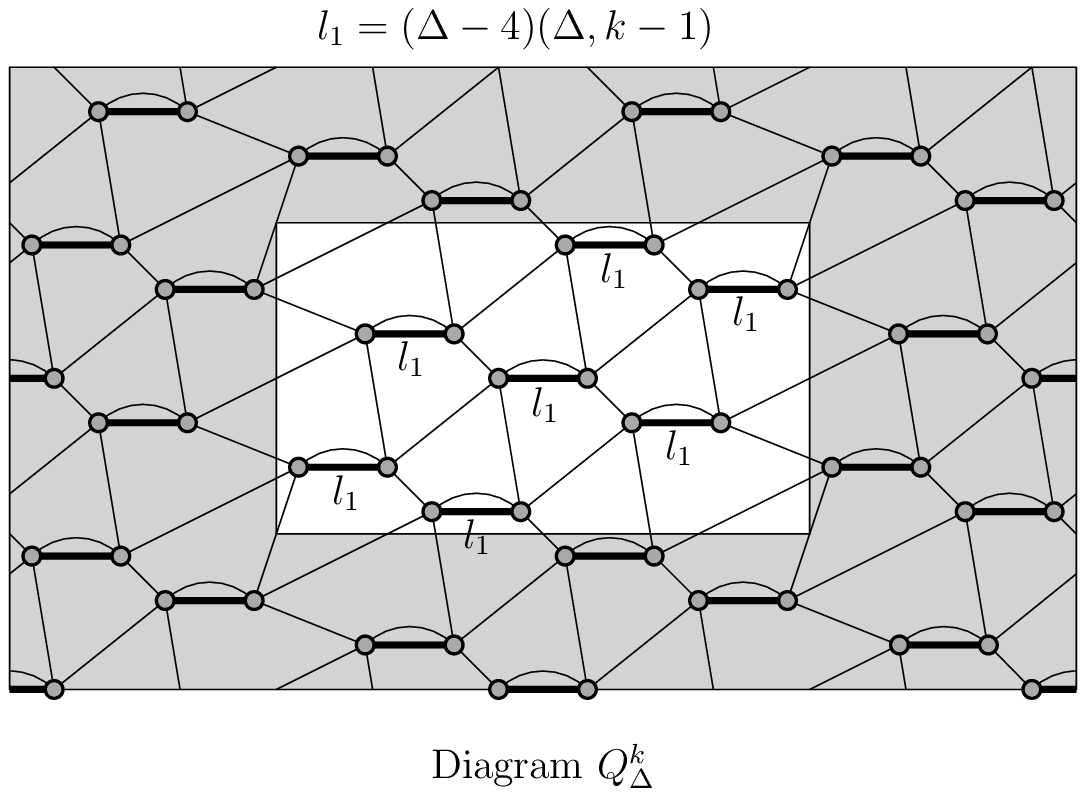}
\caption{Embedding of $Q_{\D}^{k}$ in the torus based on an embedding $K_7$.}
\label{fig:DiagramQ}
\end{center}
\end{figure}

Next we prove that $G(Q_{\D}^{k})$ has diameter at most $k$.

\begin{lem}\label{lem:BasicLemma}
Let $\Diag_{\D}^k$ be a diagram for odd $k\ge 3$. Let $e=xy$ and $e'=x'y'$ be two thick edges in $\Diag_{\D}^k$, labelled by $\alpha(\Delta,k-1)$ and $\alpha'(\Delta,k-1)$, respectively. Suppose there is a thin edge in $\Diag_{\D}^k$ joining $x$ and $x'$,  and thin edges $f=xy$ and $f=x'y'$ parallel to $e$ and $e'$, respectively. Then the distance in $G(\Diag_{\D}^k)$ between any vertex $u$ in a pod replacing $e$ and any vertex $u'$ in a pod replacing $e'$ is at most $k$. 
\end{lem}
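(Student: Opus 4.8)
The plan is to analyze the possible distances between $u$ and $u'$ through the various walks connecting the two thick edges, using the same parity trick that appeared in the proof of Proposition~\ref{prop:G(Y)-diameter}. Let $P$ be the vein of the pod replacing $e$ that contains $u$, and let $P'$ be the vein of the pod replacing $e'$ that contains $u'$. Since the pods come from $(\Delta, k-1)$-pods and $k-1$ is even, each vein $P$ has length $k-1$, so $u$ splits $P$ into two subpaths of lengths $i$ and $k-1-i$ from the two roots $x,y$; similarly $u'$ splits $P'$ into subpaths of lengths $j$ and $k-1-j$ from $x',y'$. The first step is to list the relevant closed walks in $\Diag_{\D}^k$ joining $e$ and $e'$: using the parallel thin edges $f,f'$ and the thin edge joining $x$ and $x'$, we get short closed walks such as $x\,x'\,(\text{via } f')\,y'\,(\text{via } e')\,x'$ — more to the point, combining $P$, $P'$, the thin edge $xx'$, and the thin edges $f$ (resp.\ $f'$) yields a closed walk of small weight, and by choosing which of $e$ or $f$ (and which of $e'$ or $f'$) to traverse we obtain several such walks whose weights differ.

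The key computation is then as follows. Walking from $u$ to $x$ along $P$ (length $i$), across the thin edge to $x'$ (length $1$), and from $x'$ to $u'$ along $P'$ (length $j$) gives a $u$–$u'$ walk of length $i+j+1$. If instead we go from $u$ to $y$ along $P$ (length $k-1-i$), then from $y$ back to $x$ along the thin parallel edge $f$ (length $1$), then to $x'$ (length $1$), then to $y'$ along the thin parallel edge $f'$ (length $1$), then from $y'$ to $u'$ along $P'$ (length $k-1-j$), we get a walk of length $(k-1-i)+1+1+1+(k-1-j) = 2k-1-i-j$. One of these two lengths is at most $k$ unless $i+j+1 \ge k+1$ and $2k-1-i-j \ge k+1$, i.e.\ $i+j \ge k$ and $i+j \le k-2$, which is impossible. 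Hence $\mathrm{dist}(u,u') \le k$. (If $\alpha=1$ or $\alpha'=1$ there is only one vein on the relevant side, but the argument only uses one vein through $u$ and one through $u'$, so nothing changes; and if $u$ or $u'$ is a root, the bound is even easier.)

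The main obstacle I anticipate is bookkeeping around the precise lengths of vein-segments and making sure the "mixed" walk (going out one parallel edge and back the other) is genuinely a walk in $G(\Diag_{\D}^k)$ with the claimed weight — in particular that traversing $f$ followed by the thin edge $xx'$ followed by $f'$ really contributes weight $3$, and that this does not secretly force $u$ and $u'$ onto a longer detour. One must also double-check the degenerate cases where $i \in \{0, k-1\}$ or $j \in \{0, k-1\}$ (so $u$ or $u'$ is a root of its pod), but in those cases $u$ or $u'$ coincides with an endvertex of a thin edge and the distance bound follows directly. Once the two walk-lengths $i+j+1$ and $2k-1-i-j$ are in hand, the conclusion is immediate from the observation that they sum to $2k$, so at least one is $\le k$.
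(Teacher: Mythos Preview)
Your overall strategy is the right one and matches the paper's, but there is a genuine arithmetic slip that breaks the argument as written. You compute the length of your second walk as
\[
(k-1-i)+1+1+1+(k-1-j)=2k-1-i-j,
\]
but in fact $(k-1-i)+(k-1-j)+3=2k+1-i-j$. With the correct value, your two walks have lengths $i+j+1$ and $2k+1-i-j$, which sum to $2k+2$, not $2k$; so it is entirely possible for both to equal $k+1$ (namely when $i+j=k$), and your ``at least one is $\le k$'' conclusion fails. Note also that your final argument never uses that $k$ is odd, even though you announced the parity trick in your plan --- that is a warning sign.

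The fix is exactly the parity step you promised but did not carry out. In addition to your walks A ($u\to x\to x'\to u'$, length $i+j+1$) and D ($u\to y\to x\to x'\to y'\to u'$, length $2k+1-i-j$), consider also
\[
\text{B: } u\to x\to x'\to y'\to u' \quad(\text{length } i+1+1+(k-1-j)=k+1+i-j),
\]
\[
\text{C: } u\to y\to x\to x'\to u' \quad(\text{length } (k-1-i)+1+1+j=k+1-i+j).
\]
If all four exceed $k$, then A and D force $i+j=k$ while B and C force $i=j$, giving $i=k/2$; this is impossible since $k$ is odd. This is precisely the content of the paper's argument, which packages the four walks as two closed walks of weight $2k+2$ and observes that $u,u'$ cannot be diametrically opposite in both.
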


\begin{proof}
We use a similar argument as in the proof of Proposition \ref{prop:G(Y)-diameter}. Note that, also in this case, the thick edges $e$ and $e'$ are contained in two closed walks of weight $2k+2$ (see Figure \ref{fig:BasicLemma}). Let $P$ and $P'$ be the veins in $G(\Diag_{\D}^k)$ containing $u$ and $u'$, respectively. Since $k$ is odd, if $u$ and $u'$ are at distance $k+1$ in the closed walk $xx'P'y'f'x'xPyfx$, then they cannot also be at distance $k+1$ in the closed walk $xx'f'y'P'x'xfyPx$.
\end{proof}

\begin{figure}[!ht]
\begin{center}
\includegraphics[scale=1]{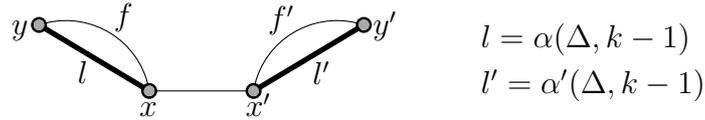}
\caption{Auxiliary figure for Lemma \ref{lem:BasicLemma}.}
\label{fig:BasicLemma}
\end{center}
\end{figure}

From Lemma \ref{lem:BasicLemma} it immediately follows 

\begin{prop}\label{prop:G(Q)-diameter}
For odd $k\ge3$ the diameter of the graph $G(Q_{\D}^k)$ is $k$.
\end{prop}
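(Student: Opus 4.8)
The plan is to derive Proposition~\ref{prop:G(Q)-diameter} from Lemma~\ref{lem:BasicLemma} together with Proposition~\ref{prop:Diameter}, exactly as the phrase ``it immediately follows'' suggests. First I would examine the diagram $Q_{\D}^k$ of Figure~\ref{fig:DiagramsPQ}(b) and classify its thick edges: by construction $Q_{\D}^k$ (for odd $k$) is obtained from an embedding of $K_7$ and carries thick edges labelled $\alpha(\D,k-1)$, possibly together with some thin structure guaranteeing the hypotheses of the lemma. The first step is to check that $Q_{\D}^k$ has maximum degree $\D$ (this is essentially bookkeeping on the labels $\alpha$ at each vertex) and that every pair of thick edges either is covered directly by Condition~(1) of Proposition~\ref{prop:Diameter} — i.e.\ lies on a closed walk of weight at most $2k+1$ — or else falls into the configuration described in Lemma~\ref{lem:BasicLemma}: two parallel thick edges $e=xy$, $e'=x'y'$ of weight $k-1$, with a thin edge $xx'$ and thin edges $f,f'$ parallel to $e,e'$.

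Next I would verify Conditions~(2) and~(3) of Proposition~\ref{prop:Diameter}. If $Q_{\D}^k$ has no thin vertices at all, these conditions are vacuous and there is nothing to do; if it does have thin vertices, I would exhibit, for each thin vertex $v$, a closed walk of weight at most $2k+1$ through $v$ and each thick edge, and a path of weight at most $k$ between any two thin vertices — these are short finite checks on the fixed small diagram (independent of $\D$ and of the exact value of odd $k\ge3$, since the weights scale with $k$ in the obvious way). With Conditions~(2) and~(3) in hand, the only remaining gap is the distance bound between vertices lying in pods of two thick edges that violate Condition~(1); Lemma~\ref{lem:BasicLemma} supplies exactly this bound (distance at most $k$) for each such offending pair. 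Combining the direct verification for the ``good'' pairs with Lemma~\ref{lem:BasicLemma} for the ``bad'' pairs, and invoking the argument of Proposition~\ref{prop:Diameter} verbatim, yields that $\operatorname{diam} G(Q_{\D}^k)\le k$. Finally I would argue $\operatorname{diam} G(Q_{\D}^k)\ge k$: a vein of a $(\D,k-1)$-pod replacing a thick edge has length $k-1$, and appending the endpoints (or a short thin path) forces two vertices at distance exactly $k$, so the diameter is precisely $k$.

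The main obstacle is purely combinatorial rather than conceptual: correctly enumerating the pairs of thick edges of $Q_{\D}^k$ and confirming that each ``bad'' pair really does sit in the precise local configuration required by Lemma~\ref{lem:BasicLemma} — parallel thick edges of weight $k-1$, a connecting thin edge, and the two parallel thin edges $f,f'$. If the diagram's symmetry (inherited from the highly symmetric $K_7$ embedding) acts transitively, or with few orbits, on these pairs, the check collapses to one or two cases; otherwise one must be careful not to miss a pair. Since the diagram is explicitly drawn in Figure~\ref{fig:DiagramsPQ}(b) and its torus embedding in Figure~\ref{fig:DiagramQ}, and the lemma was evidently tailored to this situation, I expect this verification to go through ``with little effort,'' much as in Proposition~\ref{prop:G(Z)-diameter}.
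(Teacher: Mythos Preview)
Your proposal is correct and follows the paper's intended route: since every thick edge of $Q_{\D}^k$ has a parallel thin edge and any two thick edges are joined by a thin edge (inherited from the underlying $K_7$), Lemma~\ref{lem:BasicLemma} handles \emph{every} pair of thick edges, and there are no thin vertices, so Conditions~(2) and~(3) of Proposition~\ref{prop:Diameter} are vacuous. The only refinement over your outline is that no case-splitting into ``good'' and ``bad'' pairs is needed---all pairs are of the Lemma~\ref{lem:BasicLemma} type.
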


From $Q_{\Delta}^k$ we obtain

\begin{displaymath}
|G(Q_{\Delta}^k)| = 
\begin{cases}
5(\Delta-4)\frac{2(\D-1)^{\frac{k-2}{2}}-2}{\Delta-2}+14 & \text{if $k$ is even}\\
5(\Delta-4)\frac{\Delta(\D-1)^{\frac{k-3}{2}}-2}{\Delta-2}+14 & \text{if $k$ is odd}\\
\end{cases}
\end{displaymath}

The orders for toroidal graphs obtained from $P_{\Delta}^k$ and $Q_{\Delta}^k$ are displayed in Table \ref{tab:TorusTable}.

\section{Large graphs on surfaces}
\label{sec:LargeGrapSurf}

As mentioned in the introduction, Pineda-Villavicencio and Wood \cite{PVW12} constructed, for every  
surface $\Sigma$ of Euler genus $g$, every odd diameter $k\ge3$ and every maximum $\Delta\ge \sqrt{1+24g}+2$, graphs with order \[\sqrt{\frac{3}{8}g}\Delta^{\lfloor k/2\rfloor}.\]
This is the current best lower bound for $N(\Delta,k,\Sigma)$. In the following we improve this lower bound on $N(\Delta,k,\Sigma)$ by a factor of $4$, obtaining the following bound.
\[N(\Delta,k,\Sigma)\ge\begin{cases}6\Delta^{\lfloor k/2\rfloor}& \text{if $\Sigma$ is the Klein bottle}\\  \left(\frac{7}{2}+\sqrt{6g+\frac{1}{4}}\right)\Delta^{\lfloor k/2\rfloor}& \text{otherwise.}\end{cases}\] 

 Our construction modifies  a complete graph embedded in the surface $\Sigma$, so we need the Map Colouring Theorem. This theorem was jointly proved by Heawood, Ringel and Youngs; see \cite[Theorems 4.4.5 and 8.3.1]{MohTho01}.

\begin{thm}[Map Colouring Theorem]\label{theo:MapColTheo}
Let $\Sigma$ be a surface with Euler genus $g$ and let $G$ be a graph embedded in $\Sigma$. Then
\[\chi(G)\le\frac{7+\sqrt{1+24g}}{2}.\]
Furthermore, with the exception of the Klein bottle where $\chi(G)\le6$, there is a complete graph $G$ embedded in $\Sigma$ realising the equality.
\end{thm}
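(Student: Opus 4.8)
The plan is to prove the two assertions separately: the upper bound $\chi(G)\le h$, where I write $h:=\tfrac{7+\sqrt{1+24g}}{2}$ for the Heawood number, and the sharpness claim that, outside the Klein bottle, a complete graph attains it. A computation I would record at the outset is that $h$ is the larger root of $x^2-7x+(12-6g)=0$, so squaring the defining relation gives the identity $(h-3)(h-4)=6g$. This is exactly the algebraic link between the Heawood number and the genus of complete graphs, and it will resurface in the sharpness step.

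For the upper bound I would argue by colour-criticality. Euler's formula $V-E+F=2-g$, combined with the face-count inequality $3F\le 2E$ (valid for a simple graph with $V\ge 3$, since each face is bounded by at least three edges and each edge borders at most two faces), yields $E\le 3V-6+3g$, so every simple graph embeddable in $\Sigma$ has a vertex of degree at most $6+\tfrac{6g-12}{V}$. Now let $G$ realise the maximum chromatic number over $\mathcal G(\Sigma)$ and pass to a vertex-critical subgraph $G'$; then $\delta(G')\ge\chi(G)-1$ and $|V(G')|\ge\chi(G)$. For $g\ge 2$ the quantity $6g-12$ is nonnegative, so feeding $|V(G')|\ge\chi(G)$ into the degree bound gives $\chi(G)-1\le 6+\tfrac{6g-12}{\chi(G)}$, i.e. $\chi(G)^2-7\chi(G)+12-6g\le 0$, whose solution is precisely $\chi(G)\le h$. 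The sphere ($g=0$) is the Four Colour Theorem, which I would invoke directly, and the projective plane ($g=1$) is even cheaper, since there $6g-12<0$ forces $\delta<6$ on every subgraph, hence $5$-degeneracy and $\chi\le 6=h$. The Klein bottle requires the \emph{sharper} conclusion $\chi\le 6$ in place of the value $7$ that $h$ returns; here the crude Euler estimate is not enough, and I would instead appeal to Franklin's theorem, whose proof rules out every $7$-critical graph on the Klein bottle by a short structural/discharging analysis.

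For the sharpness claim the task reduces to exhibiting, for each admissible $\Sigma$, a complete graph $K_{H}$ with $H:=\lfloor h\rfloor$ embedded in it; since $\chi(K_H)=H$ is the largest integer not exceeding $h$, this certifies that the bound is attained (and is literal equality precisely when $1+24g$ is a perfect square). Using the identity $(h-3)(h-4)=6g$ and the monotonicity of $x\mapsto(x-3)(x-4)$ for $x\ge 4$, the numerical check collapses to the genus formulas for complete graphs: in the nonorientable case one needs $\lceil\tfrac{(H-3)(H-4)}{6}\rceil\le g$, and in the orientable case (with $g=2\gamma$) the analogous $\lceil\tfrac{(H-3)(H-4)}{12}\rceil\le\gamma$, both of which follow from $(H-3)(H-4)\le(h-3)(h-4)=6g$. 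Granting the genus formulas, the embeddings exist; establishing those formulas is the main obstacle, being the deep Ringel--Youngs theorem, which constructs the required (near-)triangular embeddings through rotation systems encoded by current graphs, with a lengthy case analysis according to the residue of $H$ modulo $12$. The unique genuine exception emerges here too: $K_7$ has nonorientable genus $3$ rather than $2$, so it does not embed in the Klein bottle, which is exactly why equality drops to $6$ there; for the Klein bottle I would instead produce an explicit embedding of $K_6$ to certify that $\chi\le 6$ is attained. Combining the Heawood upper bound, Franklin's refinement for the Klein bottle, and the complete-graph embeddings then delivers the theorem.
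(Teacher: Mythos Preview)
The paper does not prove this theorem; it is quoted as a classical result and referenced to \cite[Theorems~4.4.5 and~8.3.1]{MohTho01}, so there is no in-paper argument to compare against. Your sketch is the standard route (Heawood's degeneracy bound via Euler's formula for the upper bound, the Four Colour Theorem for $g=0$, Franklin's theorem for the Klein bottle, and Ringel--Youngs for sharpness), and the pieces are assembled correctly, including the identity $(h-3)(h-4)=6g$ and the exceptional nonorientable genus of $K_7$. Since the paper merely invokes the theorem as input to Theorem~\ref{thm:Main}, your write-up goes well beyond what the paper itself supplies.
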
  
 
The right-hand side of the inequality of Theorem \ref{theo:MapColTheo} is called the {\it Heawood number} of the surface $\Sigma$ and is denoted $H(\Sigma)$. Define the {\it chromatic number $\chi$ of a surface $\Sigma$} as follows:

\[\chi(\Sigma)=\begin{cases}6& \text{if $\Sigma$ is the Klein bottle}\\  H(\Sigma)& \text{otherwise.}\end{cases}\]

The main result of this section is the following.
\begin{thm}\label{thm:Main}
For every surface $\Sigma$ of Euler genus $g$, and for every $\Delta>\lceil\frac{\chi(\Sigma)-1}{2}\rceil+1$ and every odd $k\ge 3$,
\begin{align*}
N(\Delta,k,\Sigma)\ge \chi(\Sigma)\left(\Delta-1-\Big\lceil\frac{\chi(\Sigma)-1}{2}\Big\rceil\right)\frac{\D(\D-1)^{\frac{k-3}{2}}-2}{\D-2}+2\chi(\Sigma).
\end{align*}
\end{thm}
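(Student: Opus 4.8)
The plan is to realise the claimed bound by the same diagram-to-graph machinery developed in Sections~\ref{sec:MultDiag}--\ref{sec:LGTorus}, applied this time to a complete graph furnished by the Map Colouring Theorem. Set $n=\chi(\Sigma)$ and let $K_n$ be a complete graph embedded in $\Sigma$ (Theorem~\ref{theo:MapColTheo}). I would build a diagram $\Diag_{\D}^k$ whose underlying multigraph is obtained from this $K_n$: keep each edge of $K_n$ as a thin edge, and in addition hang from each edge $e=uv$ of $K_n$ a bundle of $\alpha$ parallel thick edges labelled $\alpha(\D,k-1)$, where $\alpha=\D-1-\lceil(n-1)/2\rceil$; to keep the total (labelled) degree at each vertex at most $\D$, split the incident thick edges so that each vertex carries roughly half of the $\alpha$-bundles on each of its $n-1$ incident $K_n$-edges, which is exactly why the hypothesis $\D>\lceil(n-1)/2\rceil+1$ (equivalently $\alpha\ge1$) is needed and why the ceiling appears. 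Since this diagram embeds in $\Sigma$ (it is $K_n$ with parallel edges and pendant-free thick edges drawn alongside), Remark~\ref{rmk:Genus} gives that $G(\Diag_{\D}^k)$ embeds in $\Sigma$ as well, and Proposition~\ref{prop:Degree} bounds its maximum degree by $\D$.

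Next I would verify the diameter is at most $k$. Condition~(3) of Proposition~\ref{prop:Diameter} is vacuous or trivial here since $K_n$ has diameter~$1<k$ and there are no thin vertices once the thick bundles are attached to every edge (if isolated thin vertices remain, any two are joined by a thin edge of weight $1\le k$). For pairs of thick edges, those sitting on a common $K_n$-edge lie on a common closed walk of small weight, and those on different $K_n$-edges $e,e'$ are linked through a triangle of $K_n$: a walk of the form (thick edge along $e$, back along its thin parallel, along a $K_n$-edge, thick edge along $e'$, \dots) has weight about $2(k-1)+O(1)$, which must be pushed down to $\le 2k+1$, or else handled by the two-walks-of-weight-$2k+2$ refinement noted after Proposition~\ref{prop:Diameter}. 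This is precisely the situation packaged by Lemma~\ref{lem:BasicLemma}: each thick edge $e=xy$ has a parallel thin edge $f=xy$, and any two vertices $x,x'$ carrying thick bundles are joined by a thin edge of $K_n$; so Lemma~\ref{lem:BasicLemma} directly gives distance $\le k$ between a vertex in a pod of $e$ and a vertex in a pod of $e'$. The remaining case—a pod vertex and a thin vertex, or two pod vertices on the same $K_n$-edge—follows as in the proof of Proposition~\ref{prop:Diameter} because $\beta=k-1\le k$. Hence $G(\Diag_{\D}^k)$ has diameter exactly $k$ (at least $k$ since a $(\D,k-1)$-pod already contributes a vein of length $k-1$ and the surrounding structure adds at least one more).

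It then remains to count the vertices. The vertices of $G(\Diag_{\D}^k)$ are: the $n$ vertices of $K_n$ (each a root, shared by all pods on its incident edges), plus the internal vertices of every pod. There are $\binom{n}{2}$ edges of $K_n$, each replaced by $\alpha$ disjoint $(\D,k-1)$-pods, and each odd-parameter $(\D,k-1)$-pod has $\dfrac{2(\D-1)^{(k-2)/2}-2}{\D-2}$ internal vertices by the count in Section~\ref{sec:MultDiag}. However, to match the target formula $n\bigl(\D-1-\lceil(n-1)/2\rceil\bigr)\dfrac{\D(\D-1)^{(k-3)/2}-2}{\D-2}+2n$, the right bookkeeping is to think of the $n-1$ thick bundles leaving a single vertex $v$ as together forming, for each neighbour, half of a pod, so that per vertex one gets $\alpha=\D-1-\lceil(n-1)/2\rceil$ ``tree-halves'' of depth $(k-1)/2$ whose leaf-count is $(\D-1)^{(k-3)/2}$; carefully pairing these across the two endpoints of each $K_n$-edge reconstitutes $(\D,k-1)$-pods and yields exactly $n$ copies of $\alpha\,\dfrac{\D(\D-1)^{(k-3)/2}-2}{\D-2}$ internal vertices, while the $+2n$ accounts for the roots together with one extra shared vertex per colour class (a doubling that comes from the embedding-of-$K_n$ construction, exactly as the ``$+10$'' and ``$+14$'' terms arose for $P_{\D}^k$ and $Q_{\D}^k$). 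I would finish by simplifying $\chi(\Sigma)=\tfrac{7+\sqrt{1+24g}}{2}$ (and $=6$ for the Klein bottle) and extracting the leading term $\chi(\Sigma)\cdot\tfrac{\D-1}{2}\cdot\D^{(k-3)/2}\sim\bigl(\tfrac72+\sqrt{6g+\tfrac14}\bigr)\D^{\lfloor k/2\rfloor}$, which is the stated asymptotic improvement by a factor of $4$ over $\sqrt{3g/8}\,\D^{\lfloor k/2\rfloor}$.

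The main obstacle I anticipate is the degree bookkeeping at the vertices of $K_n$: each such vertex already has degree $n-1$ from the thin $K_n$-edges, and we must distribute the thick $(\D,k-1)$-bundles so that the total labelled degree is $\le\D$ while still ensuring every pair of thick edges is caught by Lemma~\ref{lem:BasicLemma} (i.e.\ each thick edge really does have a parallel thin edge and the roots really are joined by thin edges). Getting the split to yield the clean value $\alpha=\D-1-\lceil(n-1)/2\rceil$ and reconciling it with the exact vertex count in the theorem statement—rather than an off-by-a-constant version—is the delicate part; everything else is a routine application of Remark~\ref{rmk:Genus}, Proposition~\ref{prop:Degree}, and Lemma~\ref{lem:BasicLemma}.
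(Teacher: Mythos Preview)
Your overall plan—embed $K_n$ with $n=\chi(\Sigma)$ in $\Sigma$, decorate it into a diagram, and invoke Lemma~\ref{lem:BasicLemma}—is the right one, but the construction you describe does not work, and the gap is precisely the ``obstacle'' you flag at the end. You place the thick $(\Delta,k-1)$-bundles \emph{parallel to the edges of $K_n$}, i.e.\ at $\binom{n}{2}$ locations. With that choice each vertex $v$ already carries $n-1$ thin edges, so if every incident $K_n$-edge also carries $\alpha$ thick parallels then $v$ has labelled degree $(n-1)(1+\alpha)$; this forces $\alpha\le\lfloor\Delta/(n-1)\rfloor-1$, not $\alpha=\Delta-1-\lceil(n-1)/2\rceil$. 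Your subsequent vertex count, juggling ``tree-halves'' to land on $n\alpha\cdot\frac{\Delta(\Delta-1)^{(k-3)/2}-2}{\Delta-2}+2n$, is not a computation but a target-matching exercise (and note that $k-1$ is \emph{even} here, so the pod uses the even-parameter formula, not the odd one you quote).

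The missing idea is \emph{vertex splitting}. The paper splits each vertex $v$ of $K_n$ into adjacent vertices $v',v''$, distributing the $n-1$ original edges as evenly as possible between them; the thin edge $v'v''$ is then paralleled by a single bundle of $\alpha=\Delta-1-\lceil(n-1)/2\rceil$ thick edges labelled $(\Delta,k-1)$. Now there are exactly $n$ thick-edge locations (not $\binom{n}{2}$), the labelled degree at $v''$ is $\lceil(n-1)/2\rceil+1+\alpha=\Delta$, each thick edge automatically has a thin parallel (namely $v'v''$), and any two thick edges $v'v''$, $w'w''$ have their endpoints joined by a thin edge coming from the original $K_n$-edge $vw$—so Lemma~\ref{lem:BasicLemma} applies cleanly. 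The $+2\chi(\Sigma)$ term is then nothing mysterious: it is simply the $2n$ diagram vertices produced by the splitting. Without this step you cannot simultaneously achieve the stated $\alpha$, satisfy the hypotheses of Lemma~\ref{lem:BasicLemma}, and match the vertex count.
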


Before proving Theorem \ref{thm:Main} we recall the operation of vertex splitting. {\it Splitting a vertex $v$} consists of replacing $v$ by two adjacent vertices $v'$ and $v''$, and of replacing each edge incident with $v$ by an edge incident with either $v'$ or $v''$ leaving the other end of the edge unchanged. 

\begin{proof}[Proof of Theorem \ref{thm:Main}] We construct large graphs based on  a generalisation of the diagram $Q_{\D}^k$ in Figure \ref{fig:DiagramsPQ} (b). For a given $g$ we construct a diagram $\Q_{\D}^k$ embeddable in a surface $\Sigma$ of Euler genus $g$ such that the graph $G(\Q_{\D}^k)$ has maximum degree $\Delta$ and diameter $k$. 

To obtain $\Q_{\D}^k$ we start from the complete graph $K_{\chi(\Sigma)}$ and an embedding of $K_{\chi(\Sigma)}$ in $\Sigma$. We split every vertex $v$ in $K_{\chi(\Sigma)}$ as follows. On the surface $\Sigma$ we operate inside a neighbourhood $B_\epsilon(v)$ centred at $v$, with radius $\epsilon$ small enough so that no vertex of $K_{\chi(\Sigma)}$  other than $v$ is contained in $B_\epsilon(v)$. Take any edge of $K_{\chi(\Sigma)}$ incident with $v$ and denote it by $e_1$, then denote the other edges incident with $v$ clockwise by $e_2, e_3,\ldots,e_{\chi(\Sigma)-1}$. Split a vertex $v$ and obtain adjacent vertices $v'$ and $v''$ so that the vertex $v'$ is incident with the edges  $e_1,e_2,\ldots,e_{\lfloor\frac{\chi(\Sigma)-1}{2}\rfloor}$ and the  vertex $v''$  incident with the edges  $e_{\lfloor\frac{\chi(\Sigma)-1}{2}\rfloor+1},e_{\lfloor\frac{\chi(\Sigma)-1}{2}\rfloor+2},\ldots,e_{\chi(\Sigma)-1}$. Then a thick edge $v'v''$ labelled by $(\Delta-1-\lceil\frac{\chi(\Sigma)-1}{2}\rceil)(\Delta,k-1)$ is added; see Figure \ref{fig:DiagramsQGeneral} (b). Note that  splitting each vertex $v$ of $K_{\chi(\Sigma)}$  and the subsequent addition of one parallel thick edge do not affect the embeddability in $\Sigma$ as all these operations are carried out inside the neighbourhood $B_\epsilon(v)$.

\begin{figure}[!ht]
\begin{center}
\includegraphics[scale=1]{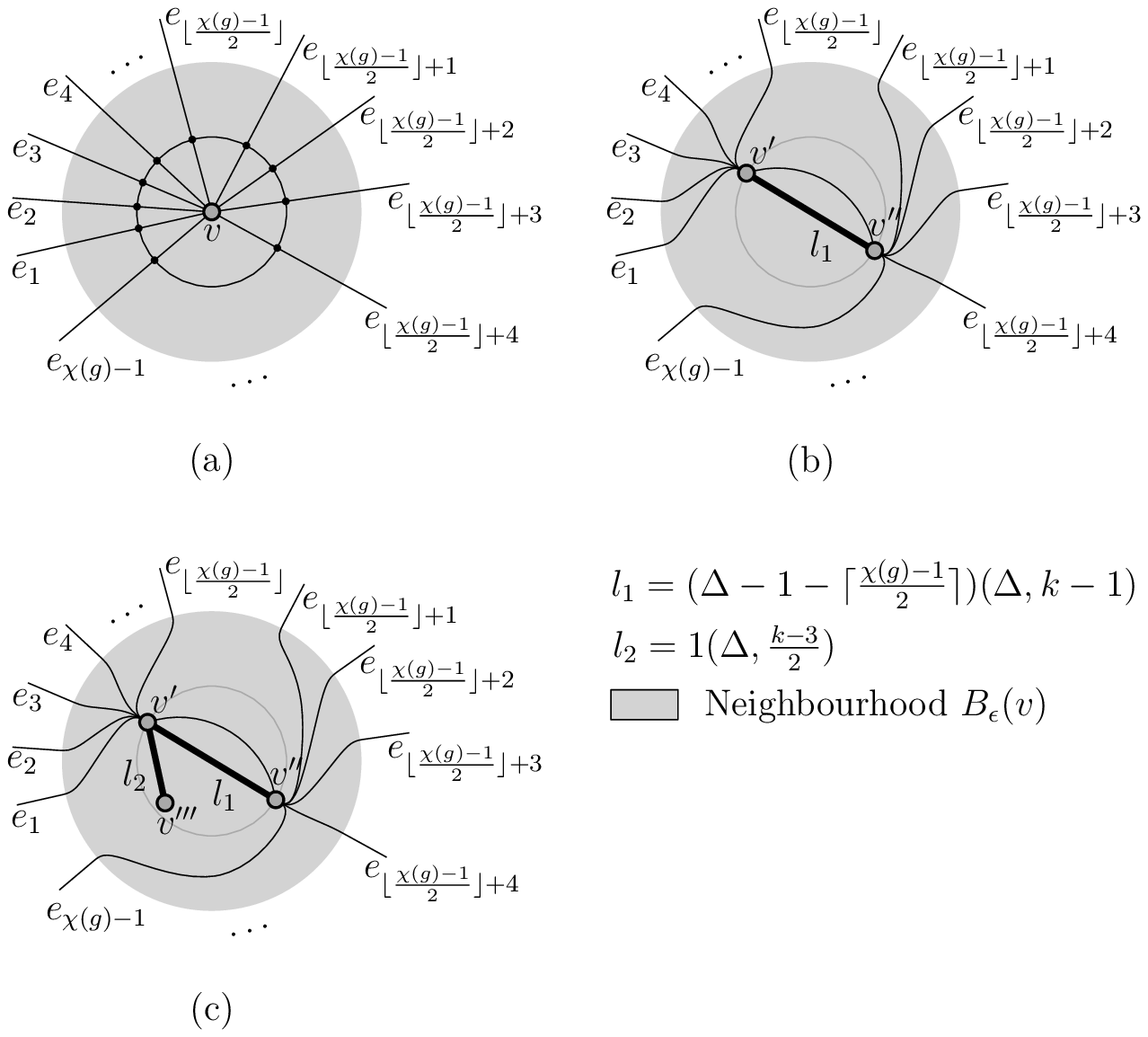}
\caption{}
\label{fig:DiagramsQGeneral}
\end{center}
\end{figure}

The resulting diagram $\Q_{\D}^k$ has maximum degree $\Delta$, and so does the graph $G(\Q_{\D}^k)$. The embeddability of $G(\Q_{\D}^k)$ follows from the embeddability of $\Q_{\D}^k$.  Note also that every thick edge in $\Q_{\D}^k$ has a parallel thin edge, and any two thick edges in $\Q_{\D}^k$ are joined by a thin edge. Thus, by Lemma \ref{lem:BasicLemma}, the diameter of $G(\Q_{\D}^k)$ is $k$. Finally we have 
\begin{displaymath}
|G(\Q_{\D}^k)|=\chi(\Sigma)\left(\Delta-1-\Big\lceil\frac{\chi(\Sigma)-1}{2}\Big\rceil\right)\frac{\D(\D-1)^{\frac{k-3}{2}}-2}{\D-2}+2\chi(\Sigma).
\end{displaymath}
\end{proof}

An example of the construction put forward in Theorem \ref{thm:Main} was already depicted in Fig.~\ref{fig:DiagramsPQ} (b); see also Fig.~\ref{fig:DiagramQ} for an embedding of such a construction in the torus. 

When $\chi(\Sigma)$ is even we can think of one improvement. Since the vertices in $\Q_{\D}^k)$ arising from $v'$ have degree $\Delta-1$, it is possible to add an extra pending edge $v'v''$ labelled by $1(\Delta,\frac{k-3}{2})$; see Figure \ref{fig:DiagramsQGeneral} (c). This would increase the order of $G(\Q_{\D}^k)$ by another $\chi(\Sigma)\frac{(\D-1)^{\frac{k-3}{2}}-1}{\D-2}$ vertices. Thus, we have the following.

\begin{cor}\label{cor:thmMain}For every surface $\Sigma$ of Euler genus $g$ and even $\chi(\Sigma)$, and for every $\Delta>\lceil\frac{\chi(\Sigma)-1}{2}\rceil+1$ and every odd $k\ge 3$, \begin{multline*}
N(\Delta,k,\Sigma)\ge\chi(\Sigma)\left(\Delta-1-\Big\lceil\frac{\chi(\Sigma)-1}{2}\Big\rceil\right)\frac{\D(\D-1)^{\frac{k-3}{2}}-2}{\D-2}\\
+\chi(\Sigma)\frac{(\D-1)^{\frac{k-3}{2}}-1}{\D-2}+2\chi(\Sigma).
\end{multline*}
\end{cor}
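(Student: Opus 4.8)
The plan is to reuse the diagram $\Q_{\D}^k$ constructed in the proof of Theorem~\ref{thm:Main} and simply hang on it the extra pending edges described in the statement. First I would note that, since $\chi(\Sigma)$ is even, $\lceil\frac{\chi(\Sigma)-1}{2}\rceil=\frac{\chi(\Sigma)}{2}$ and $\lfloor\frac{\chi(\Sigma)-1}{2}\rfloor=\frac{\chi(\Sigma)}{2}-1$, so that in $\Q_{\D}^k$ each vertex $v'$ (obtained by splitting a vertex $v$ of $K_{\chi(\Sigma)}$) is incident with $\frac{\chi(\Sigma)}{2}-1$ of the thin edges $e_1,\ldots,e_{\chi(\Sigma)-1}$, with the thin edge $v'v''$, and with the thick edge $v'v''$ of labelled degree $\D-1-\frac{\chi(\Sigma)}{2}$; its labelled degree is therefore exactly $\D-1$. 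Hence one further edge may be attached at each $v'$, and for each of the $\chi(\Sigma)$ split vertices I would add at $v'$ a pending edge labelled $1(\D,\frac{k-3}{2})$, drawn entirely inside the ball $B_\epsilon(v)$ used in the proof of Theorem~\ref{thm:Main}. Since this surgery stays inside $B_\epsilon(v)$, the resulting diagram $\widehat{\Q}_{\D}^k$ still embeds in $\Sigma$; it is a legitimate $\Diag_{\D}^k$ because $\frac{k-3}{2}\le\lfloor k/2\rfloor$; and by Proposition~\ref{prop:Degree} and Remark~\ref{rmk:Genus} the graph $G(\widehat{\Q}_{\D}^k)$ has maximum degree $\D$ and embeds in $\Sigma$.

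The vertex count is then immediate: each added pending edge labelled $1(\D,\frac{k-3}{2})$ is replaced by one $(\D,\frac{k-3}{2})$-tree rooted at $v'$, contributing $\frac{(\D-1)^{(k-3)/2}-1}{\D-2}$ new vertices (a single pendant vertex when $k=5$, and nothing at all when $k=3$); adding $\chi(\Sigma)$ such copies to $|G(\Q_{\D}^k)|$ from Theorem~\ref{thm:Main} yields exactly the claimed lower bound.

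The substance of the proof — and the only step I expect to require genuine care — is showing that $G(\widehat{\Q}_{\D}^k)$ still has diameter $k$. The bound ``$\ge k$'' is free, since $G(\widehat{\Q}_{\D}^k)$ contains $G(\Q_{\D}^k)$ as an induced subgraph, so distances between vertices already present are unchanged and hence at most $k$ by Theorem~\ref{thm:Main}; only pairs involving a vertex of a newly added tree remain. Here I would use two elementary facts. First, the thin subgraph of $\Q_{\D}^k$ is $K_{\chi(\Sigma)}$ with every vertex split into an adjacent pair $v',v''$, so for $v\neq w$ the vertex $v'$ is within distance $2$ of one of $w',w''$ and within distance $3$ of $w'$. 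Second, because the thick edge $v'v''$ has a parallel thin edge, every vertex of a pod replacing $v'v''$ lies within $\frac{k-1}{2}$ of \emph{each} of the roots $v',v''$ (descend into one tree of the pod, or cross the thin edge and descend into the other). Since a leaf of a tree hung at $v'$ is at distance at most $\frac{k-3}{2}$ from $v'$, the remaining cases close as follows: two vertices of the same new tree are at distance at most $k-3$; a new-tree vertex at $v'$ and a pod vertex of the same edge $v'v''$ are at distance at most $\frac{k-3}{2}+\frac{k-1}{2}=k-2$; a new-tree vertex at $v'$ and a pod vertex of a different thick edge $w'w''$ are at distance at most $\frac{k-3}{2}+2+\frac{k-1}{2}=k$ (go to $v'$, then to whichever of $w',w''$ is at distance at most two, then into the pod); and two new-tree vertices hung at distinct $v'$ and $w'$ are at distance at most $\frac{k-3}{2}+3+\frac{k-3}{2}=k$. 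Every pair is then accounted for, so the diameter equals $k$ and the corollary follows. It is precisely the last two estimates that force the depth $\frac{k-3}{2}$ rather than the a priori admissible $\lfloor k/2\rfloor=\frac{k-1}{2}$: this choice is exactly what keeps those sums from exceeding $k$.
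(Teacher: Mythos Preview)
Your proposal is correct and follows exactly the paper's approach: in the paragraph immediately preceding the corollary, the paper adds precisely a pending edge labelled $1(\Delta,\tfrac{k-3}{2})$ at each $v'$ (which has degree $\Delta-1$ when $\chi(\Sigma)$ is even) and records the extra $\chi(\Sigma)\frac{(\Delta-1)^{(k-3)/2}-1}{\Delta-2}$ vertices, leaving the diameter check implicit. You in fact supply more than the paper does by carrying out that diameter verification explicitly; the only slip is the phrase ``the bound $\ge k$ is free,'' where the justification you give (and what the corollary actually needs) is the upper bound $\le k$ for pairs of old vertices.
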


\section{Conclusions}

Our results and those from \cite{PVW12} imply that, for a fixed odd diameter $k$,  $N(\Delta,k,\Sigma)$ is asymptotically larger than $N(\Delta,k,\mathcal{P})$. For even diameter, however, we believe this is not the case; thus, we dare to conjecture the following.

\begin{conj}
\label{conj:FP-PV}
For each surface $\Sigma$ and each even diameter $k\ge2$, there exists a constant $\Delta_0$ such that, for maximum degree $\Delta\ge \Delta_0$, $N(\Delta,k,\Sigma)$ and $N(\Delta,k,\mathcal{P})$ are asymptotically equivalent for a fixed $k$; that is,
\[\lim_{\Delta\rightarrow \infty} \frac{N(\Delta,k,\Sigma)}{N(\Delta,k,\mathcal{P})}=1.\]
\end{conj}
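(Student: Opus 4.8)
We outline a strategy toward Conjecture~\ref{conj:FP-PV}; as the discussion will make clear, one inequality is immediate while the other is the genuine difficulty, and is precisely why the statement is posed as a conjecture rather than a theorem. The lower bound $N(\Delta,k,\Sigma)\ge N(\Delta,k,\mathcal{P})$ holds for \emph{every} $k$ and every surface, since any planar graph can be drawn inside a disk and every surface contains a disk; hence $\mathcal{P}\subseteq\mathcal{G}(\Sigma)$ and
\[
\liminf_{\Delta\to\infty}\frac{N(\Delta,k,\Sigma)}{N(\Delta,k,\mathcal{P})}\ge 1.
\]
So the entire content is the matching upper bound, i.e. to prove $N(\Delta,2j,\Sigma)\le(1+o(1))\,N(\Delta,2j,\mathcal{P})$ as $\Delta\to\infty$ for fixed even $k=2j$. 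By the planar result of \cite{PVW12} we may write $N(\Delta,2j,\mathcal{P})\sim C\Delta^{j}$ for a constant $C$ independent of $\Delta$, so equivalently the task is to show $N(\Delta,2j,\Sigma)\le(C+o(1))\Delta^{j}$.

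The heuristic reason to expect equality in the even case comes from the constructions of this paper itself. Every diagram that \emph{strictly} beats the planar order, namely $\Q_{\D}^k$ of Theorem~\ref{thm:Main} and its toroidal special case $Q_{\D}^k$, rests on Lemma~\ref{lem:BasicLemma}, whose validity is restricted to \emph{odd} $k$. There, two long pods of weight $k-1$ are forced to within distance $k$ by exhibiting two closed walks of weight $2k+2$ and using that, for odd $k$, a pair of vertices cannot be simultaneously at distance $k+1$ along both walks. For even $k$ this parity argument collapses, and the handles and cross-caps of $\Sigma$ can no longer be exploited to pack many deep pods into a bounded core. This is exactly the structural obstruction that the conjecture predicts: for even diameter the extra topology ought to be asymptotically useless.

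To turn this heuristic into a proof, the plan is to sharpen the upper bound $N(\Delta,k,\Sigma)\le cgk\Delta^{\lfloor k/2\rfloor}$ of \v{S}iagiov\'a and Simanjuntak \cite{SR04} so that, for even $k$, the Euler genus $g$ enters only through lower-order terms. Starting from an extremal graph $G$ on $\Sigma$ of even diameter $2j$, maximum degree $\Delta$ and $n$ vertices, one invokes the Euler-formula bound $|E(G)|\le 3n+3g-6$. Since a planar graph on $n$ vertices has at most $3n-6$ edges, $G$ exceeds the planar edge bound by at most $3g=O(1)$ edges; relative to a spanning tree, $G$ therefore carries only $O(g)$ more surplus (non-tree) edges than the plane permits. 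I would then run a breadth-first search from a central vertex and try to show that the leading $\Delta^{j}$ term of $n$ is governed by a single tree of depth $j$, the very structure realising the planar constant $C$, while the $O(g)$ surplus edges increase the vertex count by at most $o(\Delta^{j})$, because producing an extra $\Omega(\Delta)$-fold branching at the critical depth without violating the even diameter $2j$ would require precisely the pod-combining mechanism that Lemma~\ref{lem:BasicLemma} supplies for odd, but not for even, $k$.

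The hard part, and the reason the statement remains conjectural, is this last step: converting an $O(g)$ bound on surplus \emph{edges} into an $o(\Delta^{j})$ bound on surplus \emph{vertices} while tracking the leading coefficient rather than merely the order of magnitude. A small edge surplus does not, by itself, place $G$ structurally close to a planar graph, so what is really needed is an extremal-structure theorem asserting that every near-extremal even-diameter graph on $\Sigma$ agrees, up to $o(\Delta^{j})$ vertices, with a planar tree-like graph. The existing surface bounds are far too crude for this, as they discard constant (indeed $gk$) factors; a successful argument would require a substantially more delicate breadth-first double counting that exploits the even-$k$ parity obstruction to rule out the dense, pod-packed cores that are available only in odd diameter.
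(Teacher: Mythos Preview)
The statement you were asked to prove is a \emph{conjecture}; the paper does not prove it and offers no argument beyond the single remark that the Knor--\v{S}ir\'a\v{n} result for $k=2$ supports it. There is therefore no paper proof to compare your proposal against.

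Your write-up is honest about this: you present a strategy, not a proof, and you correctly isolate the trivial direction (every planar graph embeds in $\Sigma$, so $N(\Delta,k,\Sigma)\ge N(\Delta,k,\mathcal{P})$) from the substantive one. Your heuristic that the odd-$k$ gain in the paper's constructions hinges on the parity trick in Lemma~\ref{lem:BasicLemma}, which breaks for even $k$, is a reasonable reading of the evidence. But you should be clear that this is only evidence about \emph{these particular constructions}, not an obstruction for arbitrary graphs on $\Sigma$; the failure of one mechanism does not preclude others.

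The genuine gap is exactly where you say it is. Your proposed route---bound the edge surplus over the planar bound by $3g$ via Euler's formula, then convert that into an $o(\Delta^{j})$ vertex surplus---does not work as stated: a bounded number of extra edges can still connect subtrees that each carry $\Theta(\Delta^{j})$ vertices, so there is no direct passage from ``$O(g)$ surplus edges'' to ``$o(\Delta^{j})$ surplus vertices''. What would actually be needed is a structural theorem for near-extremal even-diameter graphs on $\Sigma$, and nothing of that kind is available in the paper or in the cited literature. So your proposal is a fair sketch of the landscape, but it is not a proof, and the paper does not claim one either.
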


Knor and \v{S}ir\'a\v{n} \cite{KS97} result for diameter 2 supports this conjecture. 

For odd $k$ we think the actual assymptotic value of $N(\Delta,k,\Sigma)$  is $(c_1+c_2\sqrt{g})\D^{\lfloor k/2\rfloor}$, where $c_1$ and $c_2$ are absolute constants. The case of $g=0$ was proved in \cite{PVW12}.

All the graphs constructed in this paper are non-regular. We could look at large regular graphs embedded in surfaces as well. This variation of the degree/diameter problem has already attracted some interest; see, for instance, \cite{Pre10}. Such direction merits further attention.

\appendix
\section{Tables of largest known planar and toroidal graphs}
\begin{table}
\def\baselinestretch{1,8}
\textwidth 5.825 in
\textheight 8.25 in
\newbox\labox\newbox\nubox\newdimen\lawi\newdimen\nuwi\newdimen\supwi
\def\degre#1{\vbox to 9mm{\vfil\hbox to9mm{{\large\sl #1}\hfil}}}
\def\BB#1#2{\setbox\labox\hbox{{\it #1}}\setbox\nubox\hbox{#2}
\lawi=\wd\labox\nuwi=\wd\nubox
\ifdim\nuwi>\lawi\supwi=\nuwi\else\supwi=\lawi\fi
\vbox to9mm{\vfil
\hbox to\supwi{\hfil\box\labox}\vfil
\hbox to\supwi{\hfil\box\nubox}\vfil}
}
\def\st{\mathord{*}}

%%%%%%%%%%%%%%%%%%%%%%%%%%%%%%%%%%%%%%%%%%%%%%%%%%%%%%%%%%%%%%%%%%%%%%
%\begin{document}

%\newpage

\begin{small}
\renewcommand{\doublerulesep}{0.1mm}
\renewcommand{\tabcolsep}{0.8mm}
\renewcommand{\arraystretch}{.1}

\noindent\begin{tabular}{||r||r|r|r|r|r|r|r|r|r||}
\hline\hline
   {\large\sl  $k$}
 & {\large\sl  2}
 & {\large\sl  3}
 & {\large\sl  4}
 & {\large\sl  5}
 & {\large\sl  6}
 & {\large\sl  7}
 & {\large\sl  8}
 & {\large\sl  9}
 & {\large\sl 10} \\
 \makebox[9mm][l]{\large $\Delta$} & & & & & & & & &\\
\hline\hline \degre{3}&
\BB{FHS}{\bf 7}&\BB{}{{\bf 12}}&\BB{}{18}&\BB{}{28}&
\BB{E}{38}&\BB{FHS}{53}&\BB{FHS}{77}&\BB{FHS}{109}&\BB{FHS}{157}\\
\hline \degre{4}&
\BB{YLD}{\bf 9}&\BB{}{16}&\BB{}{27}&\BB{FHS}{44}&
\BB{}{81}&\BB{FHS}{134}&\BB{T}{243}&\BB{FHS}{404}&\BB{FHS}{728}\\
\hline \degre{5}&
\BB{YLD}{\bf 10}&\BB{FHS}{19}&\BB{E}{39}&\BB{FHS}{73}&
\BB{T}{158}&\BB{FHS}{289}&\BB{T}{638}&\BB{FHS}{1\,153}&\BB{T}{2\,558}\\
\hline \degre{6}&
\BB{YLD}{\bf 11}&\BB{FHS}{24}&\BB{T}{55}&\BB{\underline{117}}{114}&
\BB{T}{280}&\BB{\underline{579}}{564}&\BB{T}{1\,405}&\BB{\underline{2\,889}}{2\,814}&\BB{T}{7\,030}\\
\hline \degre{7}&
\BB{YLD}{\bf 12}&\BB{FHS}{28}&\BB{T}{74}&\BB{\underline{165}}{161}&
\BB{T}{452}&\BB{\underline{984}}{959}&\BB{T}{2\,720}&\BB{\underline{5\,898}}{5\,747}&\BB{T}{16\,328}\\
\hline \degre{8}&
\BB{FHS}{\bf 13}&\BB{FHS}{33}&\BB{T}{97}&\BB{\underline{228}}{225}&\BB{T}{685}&\BB{\underline{1\,590}}{1\,569}&\BB{T}{4\,901}&
\BB{\underline{11\,124}}{10\,977}&\BB{T}{33\,613}\\
\hline \degre{9}&
\BB{FHS}{\bf 14}&\BB{FHS}{37}&\BB{T}{122}&\BB{\underline{293}}{289}&\BB{T}{986}&\BB{\underline{2\,338}}{2\,305}&
\BB{T}{7\,898}&\BB{\underline{18\,698}}{18\,433}&\BB{T}{63\,194}\\
\hline \degre{10}&
\BB{FHS}{\bf 16}&\BB{FHS}{42}&\BB{T}{151}&\BB{\underline{375}}{372}&\BB{T}{1\,366}&\BB{\underline{3\,369}}{3\,342}&
\BB{T}{12\,301}&\BB{\underline{30\,315}}{30\,072}&\BB{T}{110\,716}\\
\hline\hline
\end{tabular}
\end{small}
\begin{center}
%Table of largest known planar graphs in April 2012.
\end{center}

\caption{Table of largest known planar graphs in February  2013. Bold entries denote optimal graphs. Underlined entries correspond to the order of our largest known graphs; the old value is also recorded in the same cell. A $YLD$ acronym denotes a graph found -- or proven to be optimal -- by Yang, Lin and Dai \cite{YLD02}. A $FHS$ acronym denotes a graph found by Fellows, Hell and Seyffarth \cite{FHS98}. A $T$ acronym denotes a graph found by Tishchenko \cite{Tis2011}. An $E$ acronym denotes a graph found by Geoffrey Exoo.}
\label{tab:LargePlanar}
\end{table}

\appendix
\begin{table}
\def\baselinestretch{1,8}
\textwidth 5.825 in
\textheight 8.25 in
\newbox\labox\newbox\nubox\newdimen\lawi\newdimen\nuwi\newdimen\supwi
\def\degre#1{\vbox to 9mm{\vfil\hbox to9mm{{\large\sl #1}\hfil}}}
\def\BB#1#2{\setbox\labox\hbox{{\it #1}}\setbox\nubox\hbox{#2}
\lawi=\wd\labox\nuwi=\wd\nubox
\ifdim\nuwi>\lawi\supwi=\nuwi\else\supwi=\lawi\fi
\vbox to9mm{\vfil
\hbox to\supwi{\hfil\box\labox}\vfil
\hbox to\supwi{\hfil\box\nubox}\vfil}
}
\def\st{\mathord{*}}

%%%%%%%%%%%%%%%%%%%%%%%%%%%%%%%%%%%%%%%%%%%%%%%%%%%%%%%%%%%%%%%%%%%%%%
%\begin{document}

%\newpage

\begin{small}
\renewcommand{\doublerulesep}{0.1mm}
\renewcommand{\tabcolsep}{0.8mm}
\renewcommand{\arraystretch}{.1}

\noindent\begin{tabular}{||r||r|r|r|r|r|r|r|r|r||}
\hline\hline
   {\large\sl  $k$}
 & {\large\sl  2}
 & {\large\sl  3}
 & {\large\sl  4}
 & {\large\sl  5}
 & {\large\sl  6}
 & {\large\sl  7}
 & {\large\sl  8}
 & {\large\sl  9}
 & {\large\sl 10} \\
 \makebox[9mm][l]{\large $\Delta$} & & & & & & & & &\\

\hline\hline \degre{3}&
\BB{\underline{r}}{10}&\BB{\underline{r}}{{16}}&\BB{\underline{r}}{26}&\BB{\underline{r}}{38}&
\BB{\underline{r}}{56}&\BB{\underline{r}}{74}&\BB{\underline{r}}{92}&\BB{P}{120}&\BB{P}{160}\\

\hline \degre{4}&
\BB{\underline{r}}{13}&\BB{\underline{r}}{25}&\BB{\underline{r}}{41}&\BB{\underline{r}}{61}&
\BB{P}{90}&\BB{P}{180}&\BB{P}{270}&\BB{P}{540}&\BB{P}{810}\\

\hline \degre{5}&
\BB{\underline{r}}{16}&\BB{\underline{r}}{30}&\BB{\underline{r}}{48}&\BB{P}{100}&
\BB{P}{160}&\BB{P}{400}&\BB{P}{640}&\BB{P}{1\,600}&\BB{P}{2\,560}\\

\hline \degre{6}&
\BB{\underline{r}}{19}&\BB{\underline{r}}{37}&\BB{\underline{r}}{61}&\BB{P}{150}&
\BB{\underline{p}}{280}&\BB{P}{750}&\BB{\underline{p}}{1\,405}&\BB{P}{3\,750}&\BB{\underline{p}}{7\,030}\\

\hline \degre{7}&
\BB{\underline{p}}{12}&\BB{P}{35}&\BB{\underline{p}}{74}&\BB{P}{210}&
\BB{\underline{p}}{452}&\BB{P}{1\,260}&\BB{\underline{p}}{2\,720}&\BB{P}{7\,560}&\BB{\underline{p}}{16\,328}\\

\hline \degre{8}&
\BB{\underline{p}}{13}&\BB{P}{40}&\BB{\underline{p}}{97}&\BB{P}{280}&\BB{\underline{p}}{685}&\BB{P}{1\,960}&\BB{\underline{p}}{4\,901}&
\BB{P}{13\,720}&\BB{\underline{p}}{33\,613}\\

\hline \degre{9}&
\BB{\underline{p}}{14}&\BB{P}{45}&\BB{\underline{p}}{122}&\BB{Q}{364}&\BB{\underline{p}}{986}&\BB{Q}{2\,884}&
\BB{\underline{p}}{7\,898}&\BB{Q}{23\,044}&\BB{\underline{p}}{63\,194}\\

\hline \degre{10}&
\BB{\underline{p}}{16}&\BB{P}{50}&\BB{\underline{p}}{151}&\BB{Q}{476}&\BB{\underline{p}}{1\,366}&\BB{Q}{4\,256}&
\BB{\underline{p}}{12\,301}&\BB{Q}{38\,276}&\BB{\underline{p}}{110\,716}\\

\hline\hline
\end{tabular}
\end{small}
\begin{center}
%Table of largest known planar graphs in April 2012.
\end{center}

\caption{Table of largest known toroidal graphs in February  2013. Entries displaying $P$ and $Q$ denote graphs resulting from diagram $P_{\D}^k$ and $Q_{\D}^k$ respectively. An $r$ denotes a largest known regular toroidal graph, whereas a $p$ denotes a largest known planar graph.}
\label{tab:TorusTable}
\end{table}

\end{document}